\makeatletter\@addtoreset{equation}{section}\makeatother
\makeatletter\@addtoreset{figure}{section}\makeatother
\makeatletter\@addtoreset{table}{section}\makeatother
\makeatletter\@addtoreset{equation}{section}\makeatother
\makeatletter\@addtoreset{figure}{section}\makeatother
\makeatletter\@addtoreset{table}{section}\makeatother
\newtheorem{theorem}{Theorem}[section]
\newtheorem{prop}[theorem]{Proposition}
\newtheorem{problem}[theorem]{Problem}
\newtheorem{cor}[theorem]{Corollary}
\newtheorem{theoremL}{Theorem}
\newtheorem{conjL}{Conjecture}
\newtheorem{questionL}{Question}
\newcommand{\R}{{\mathbb R}}
\newcommand{\Z}{{\mathbb Z}}
\newcommand{\N}{{\mathbb N}}
\newcommand{\op}[1]{\!\!\mathop{\rm ~#1}\nolimits}
\newenvironment{remark}{\refstepcounter{theorem}\par\medskip\noindent{\bf
Remark~\thetheorem~~}}{\unskip\nobreak\hfill\hbox{ $\oslash$}\par\bigskip}
\newenvironment{question}{\refstepcounter{theorem}\par\medskip\noindent{\bf
Question~\thetheorem~~}}{\unskip\nobreak\hfill\hbox{ $\oslash$}\par\bigskip}
\newenvironment{example}{\refstepcounter{theorem}\par\medskip\noindent{\bf
Example~\thetheorem~~}}{\unskip\nobreak\hfill\hbox{ $\oslash$}\par\bigskip}
\newenvironment{definition}{\refstepcounter{theorem}\par\medskip\noindent{\bf
Definition~\thetheorem~~}}
\renewcommand{\geq}{\geqslant}
\renewcommand{\leq}{\leqslant}
\begin{document}

\title[Optimization problem for 
non\--Hamiltonian periodic  flows]{An integer optimization problem \\  
for non\--Hamiltonian periodic flows}

\author{\'Alvaro Pelayo \,\,\,\,\,\,\,\,\,\,\,\, \,\,\,\,\,\,\,Silvia Sabatini}
\date{}

\maketitle

\begin{abstract}
Let $\mathcal{C}$ be the class of compact $2n$\--dimensional symplectic manifolds $(M,\omega)$
for which the first or $(n-1)$ Chern class vanish. We point out an integer 
optimization problem to find a lower bound $\mathcal{B}(n)$ 
on the number of equilibrium points of non\--Hamiltonian symplectic periodic flows on 
manifolds $(M,\omega) \in \mathcal{C}$. 
As a consequence, we confirm in dimensions $2n \in \{8,10,12,14,18,20, 22\}$
a conjecture for unitary manifolds made by  Kosniowski in 1979  
for the subclass $\mathcal{C}$. 
\end{abstract}

\section{Introduction}

We point out how to pose a classical geometric problem 
concerning the size of  fixed point sets of  $S^1$\--actions on $2n$\--dimensional manifolds, 
as an integer programming problem.  This 
classical problem goes back to Frankel (1959) and Kosniowski (1979). 
By studying the integer programming problem we obtain some information 
on the classical problem for low values of $n$, under certain topological
assumptions.

\subsection{A geometric problem}
A \emph{symplectic form} on a smooth manifold $M$ is a closed, non\--degenerate
two\--form $\omega \in \Omega^2(M)$. \emph{For simplicity, throughout we assume that $M$
is connected}. Let $S^1 \subset \mathbb{C}$ be the multiplicative group 
of unit complex numbers.  
We say that an $S^1$\--action on a symplectic manifold $(M,\omega)$ 
is \emph{symplectic} if it preserves $\omega$.  Let 
$\mathcal{X}_M$ be the vector field on $M$ induced by the $S^1$ action. 
An $S^1$\--action is \emph{Hamiltonian} if the $1$\--form
$$\iota_{\mathcal{X}_M} \omega:=\omega(\mathcal{X}_M,\cdot)$$ is exact, that is,  there exists 
 a smooth map $\mu \colon M \to \R$ such that
$- \op{d}\! \mu = \iota_{\mathcal{X}_M} \omega$. 
The map $\mu$ is called a \emph{momentum map}. 
In the terminology of dynamical systems, we refer to symplectic $S^1$\--actions as
\emph{symplectic periodic flows}. The fixed points of the action correspond to the equilibrium points
of the flow. 

A classical question in symplectic
geometry is whether symplectic $S^1$\--actions on compact symplectic manifolds which possess 
nonempty discrete fixed point set are Hamiltonian. It is unknown whether there is a non\--Hamiltonian
 symplectic $S^1$ action on  a compact symplectic manifold $M$ for which the 
 fixed set $M^{S^1}$ is discrete but not empty.  If the point set is not required to be discrete,  
there are non\--Hamiltonian symplectic $S^1$\--actions with no fixed points; for
instance,  given an even\--dimensional torus, consider the natural action of any
circle subgroup.  In \cite[Proposition~1 and Section~3]{McDuff1988}, McDuff  constructs  
a non\--Hamiltonian symplectic $S^1$\--action on a six\--dimensional compact
symplectic manifold,  whose fixed sets are tori.  If the fixed point set is discrete and nonempty, the problem
has a  history of partial answers, but the solution is unknown in general.  
The following question is closely related to this problem.

\begin{questionL} \label{qst}
\emph{Given a compact symplectic $2n$\--dimensional manifold $(M,\omega)$ with a symplectic $S^1$\--action 
and nonempty discrete fixed point set $M^{S^1}$, what 
are lower bounds $\mathcal{B}(n)$ on the cardinality of $M^{S^1}$  
depending on the $n$}?
\end{questionL}

Question~\ref{qst} goes back to  Frankel (1959), who gave a sharp bound for
compact K\"ahler manifolds. A conjecture in this direction (Conjecture~\ref{K1979}) 
was made for the larger class of unitary
manifolds by Kosniowski (1979), which remains open in general, as far as we know. The work of
McDuff in 1988 (Theorem~\ref{dusa}) gives a sharp answer for compact symplectic $4$\--manifolds.
We review these results in Section~\ref{s2}. 

\subsection{Summary of results}
In Section~\ref{s3} we prove a result
(Theorem~\ref{ab}), the proof of which uses \cite[Theorem 1.2]{GoSa12} to give a non\--explicit formula for $\mathcal{B}(n)$, 
provided that the first or $(n-1)$ Chern class of $M$ vanish.
Theorem~\ref{ab} gives rise to an integer optimization problem (Problem~\ref{op}, see also 
Problem~\ref{op2})  to find $\mathcal{B}(n)$. The 
problem may be solved by hand for several values of $n$. 
In this way we confirm a conjecture of Kosniowski of 1979 in dimensions 
$8\leq 2n \leq 22$, $2n \neq 16$ for the class of symplectic manifolds with
vanishing first or $(n-1)$ Chern class.

By studying Theorem~\ref{ab} we 
obtain some properties related to the fixed points of the $S^1$\--action, namely,
we give necessary conditions on the number of negative weights at the fixed points (e.g. 
Theorem \ref{est1}).  As a consequence, when 
$n=2m+1$ and $m\notin\{6k(k+1)+1\mid k\in \Z_{\geq 0}\}$, we show
that $\mathcal{B}(n) \geq 4$. The estimates are in some cases  higher than
known estimates: for instance,  $\mathcal{B}(5)=24$ which improves the known estimate of $3$. 
 However, we have not solved the integer optimization problem we introduce to 
give a formula for $\mathcal{B}(n)$ for general $n\in \mathbb{N}$. We suspect that it would in some cases be amenable to techniques
of linear programming, and if so, it could possibly lead to the solution of the conjecture in more cases.

\section{Preliminares} \label{s2}

We review some  results which are relevant  to the answers given to
Question~\ref{qst} in this article, including some which we will need in the proofs.

\subsection{Origins}

At least from the point of view of equivariant symplectic geometry of torus actions,
 the interest in Question~\ref{qst} 
has been to a large extent motivated by a result by T. Frankel on $S^1$\--actions 
on compact K\"ahler manifolds.

\begin{theorem}[Frankel '59]\label{general0}
A symplectic K\"ahler $S^1$\--action on a compact K\"ahler manifold is Hamiltonian
if and only if it has fixed points. 
\end{theorem}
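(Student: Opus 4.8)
The plan is to prove the two implications separately, with essentially all the work concentrated in the ``if'' direction. I would first set $\theta := \iota_{\mathcal{X}_M}\omega$ and record the one fact that is convention-independent: since the action preserves $\omega$ we have $\mathcal{L}_{\mathcal{X}_M}\omega = 0$, so Cartan's formula gives $d\theta = \mathcal{L}_{\mathcal{X}_M}\omega - \iota_{\mathcal{X}_M}d\omega = 0$. Thus $\theta$ is always closed, and the action is Hamiltonian precisely when $[\theta] = 0$ in $H^1(M;\R)$, in which case $\theta = -d\mu$ exhibits the momentum map. The ``only if'' direction then needs no Kähler hypothesis: if $\mu\colon M \to \R$ exists, then by compactness $\mu$ attains a maximum at some $p$, where $d\mu(p)=0$ forces $\iota_{\mathcal{X}_M(p)}\omega = 0$, and nondegeneracy of $\omega$ gives $\mathcal{X}_M(p)=0$, so $p$ is a fixed point.

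For the ``if'' direction I would use Hodge theory adapted to the Kähler structure. Because the action is by Kähler isometries, $\mathcal{X}_M$ is a real holomorphic Killing field; writing $J$ for the complex structure and $g(\cdot,\cdot)=\omega(\cdot,J\cdot)$ for the Kähler metric, one has $\theta = (J\mathcal{X}_M)^\flat$. I would Hodge-decompose the closed $1$-form as $\theta = \theta_h + dh$ with $\theta_h$ harmonic (the coexact part of a closed form vanishes, since $d\delta\gamma=0$ forces $\delta\gamma=0$), so that Hamiltonicity is equivalent to $\theta_h=0$. On a compact Kähler manifold the real harmonic $1$-forms split according to $\mathcal{H}^{1,0}\oplus\mathcal{H}^{0,1}$, so I would write $\theta_h = \beta + \bar\beta$ with $\beta$ a holomorphic $1$-form.

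The crux is to force $\theta_h=0$ using holomorphicity. Let $Z = \tfrac12(\mathcal{X}_M - iJ\mathcal{X}_M)$ be the $(1,0)$-part of $\mathcal{X}_M$; since the action preserves $J$, the field $Z$ is holomorphic, so $\beta(Z)$ is a holomorphic function on the compact connected manifold $M$, hence constant. At a fixed point $p$ we have $\mathcal{X}_M(p)=0$, so $Z(p)=0$ and therefore $\beta(Z)\equiv 0$. I would then compute $\|\theta_h\|_{L^2}^2 = \langle\theta_h,\theta\rangle - \langle\theta_h,dh\rangle$, where the second term equals $\langle\delta\theta_h,h\rangle = 0$ because $\theta_h$ is coclosed; hence $\|\theta_h\|_{L^2}^2 = \int_M g(\theta_h,\theta)\,dV = \int_M \theta_h(J\mathcal{X}_M)\,dV$. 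Using $JZ = iZ$, so that $J\mathcal{X}_M = iZ - i\bar Z$, the integrand $(\beta+\bar\beta)(iZ-i\bar Z)$ collapses by bidegree to $i\beta(Z) - i\overline{\beta(Z)}$, which vanishes. Thus $\theta_h=0$, $\theta=dh$ is exact, and the action is Hamiltonian.

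I expect the main obstacle to be exactly this last step: setting up the Kähler Hodge decomposition cleanly and carrying out the bidegree bookkeeping so that holomorphicity of the action (hence of $Z$) forces the harmonic part of $\iota_{\mathcal{X}_M}\omega$ to pair to zero against $J\mathcal{X}_M$. By contrast, the closedness of $\theta$, the ``only if'' direction, and the identification of Hamiltonicity with the vanishing of $[\theta]$ are all routine, so the entire content of the theorem sits in the vanishing of $\theta_h$ for a holomorphic Killing field with a zero.
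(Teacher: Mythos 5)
Your proof is correct. Note that the paper itself offers no proof of this statement: Theorem~\ref{general0} is quoted as a classical result of Frankel with a citation, so there is nothing internal to compare against. Your argument is essentially Frankel's original one: the ``only if'' direction via the maximum of $\mu$ is standard, and the ``if'' direction correctly reduces Hamiltonicity to the vanishing of the harmonic part $\theta_h$ of the closed form $\iota_{\mathcal{X}_M}\omega$, then kills $\theta_h$ by writing it as $\beta+\bar\beta$ with $\beta$ holomorphic, observing that $\beta(Z)$ is a holomorphic (hence constant) function vanishing at a fixed point, and pairing $\theta_h$ against $J\mathcal{X}_M$ in $L^2$. The bidegree bookkeeping in the last step is right: the integrand collapses to $-2\,\mathrm{Im}\,\beta(Z)$, which is where the fixed-point hypothesis is genuinely used. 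One could alternatively phrase the key step as the classical fact that a Killing field on a compact Riemannian manifold decomposes its dual one-form into harmonic plus coexact parts, but your holomorphic-function argument is cleaner in the K\"ahler setting and is faithful to Frankel's approach.
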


The following appeared in McDuff  \cite[Proposition~2]{McDuff1988}.
 
 \begin{theorem}[McDuff, '88] \label{dusa}
 A symplectic $S^1$\--action on a $4$\--dimensional
compact and connected symplectic manifold with at least one fixed point
is Hamiltonian. 
\end{theorem}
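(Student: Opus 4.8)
The plan is to recast Hamiltonicity as the vanishing of a cohomology class and then exploit the two\--dimensionality of the symplectic reductions, which is special to the $4$\--dimensional setting. Since the action preserves $\omega$, Cartan's formula $\mathcal{L}_{\mathcal{X}_M}\omega = \DD(\iota_{\mathcal{X}_M}\omega)$ shows that $\alpha := \iota_{\mathcal{X}_M}\omega$ is a closed $1$\--form, and by definition the action is Hamiltonian if and only if $[\alpha]=0$ in $H^1(M;\R)$. First I would average an $\omega$\--compatible almost complex structure over $S^1$ to obtain an invariant $J$ and invariant metric $g=\omega(\cdot,J\cdot)$; then the fixed set $M^{S^1}$, which equals the zero set of $\mathcal{X}_M$, is a disjoint union of $J$\--holomorphic, hence symplectic, submanifolds. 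In real dimension $4$ these components are of even codimension, so each is either an isolated point or an embedded symplectic surface. An equivariant Darboux normal form supplies local weights at each component; moreover $\iota_{\mathcal{X}_M}\alpha=\omega(\mathcal{X}_M,\mathcal{X}_M)=0$ and $\mathcal{L}_{\mathcal{X}_M}\alpha=0$, so any local primitive of $\alpha$ is automatically $S^1$\--invariant, i.e. a local momentum map, for which the fixed components are Morse--Bott critical sets of even index determined by the negative weights.

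Next, arguing by contradiction, I would assume the action is not Hamiltonian, so $[\alpha]\neq 0$. Passing to the minimal cover $\pi\colon\widetilde M\to M$ on which $\pi^*\alpha=\DD f$ becomes exact produces a global $S^1$\--invariant momentum map $f$ for the lifted action, with the deck group translating $f$ by the period subgroup of $[\alpha]$. Equivalently, after projecting $[\alpha]$ onto a primitive integral direction (perturbing if necessary so the period lattice is $\Lambda\cong\Z$), one obtains a circle\--valued momentum map $\mu\colon M\to \R/\Lambda\cong S^1$ whose critical set is exactly $M^{S^1}$. The crucial point is that a circle\--valued momentum map, unlike an honest $\R$\--valued one, has no global minimum or maximum, so the usual argument forcing a Hamiltonian structure fails and must be replaced by a monodromy argument around $S^1$.

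I would then run symplectic reduction along $\mu$. For a regular value $t$ the reduced space $M_t=\mu^{-1}(t)/S^1$ is a compact symplectic orbifold of real dimension $2$, that is, a surface, and over any interval of regular values these surfaces are canonically symplectomorphic. Tracking the family $\{M_t\}$ as $t$ runs once around $\R/\Lambda$, each critical value contributed by a fixed component produces an explicit surgery of the reduced surface, governed by the local weight data (blowing a point up or down at an isolated saddle, or a cut modification along a fixed surface), together with a jump in the piecewise\--linear Duistermaat--Heckman area function. Because $\mu$ is circle\--valued, after a complete loop the family must return to the starting reduced surface and the Duistermaat--Heckman function must be periodic.

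The hard part will be showing that these return and periodicity conditions are incompatible with $M^{S^1}\neq\varnothing$. Concretely, one must prove that the signed wall\--crossing contributions of the fixed components to a suitable invariant of the reduced surface --- its Euler characteristic, or equivalently the slope increments of the Duistermaat--Heckman function --- cannot cancel around the loop once a fixed point is present. This is precisely where dimension $4$ is essential: only here are the reduced spaces $2$\--dimensional and rigid enough that the local weight patterns $(a,b)$ at isolated points and the normal data along fixed surfaces force a definite, non\--cancelling jump, in contrast to higher dimensions where non\--Hamiltonian actions with fixed submanifolds do exist. Establishing this sign\--consistency statement, with the attendant bookkeeping of admissible weights, is the heart of the argument; the averaging, equivariant normal form, and reduction steps are otherwise standard.
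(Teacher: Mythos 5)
The paper does not actually prove this statement: it is quoted from McDuff \cite[Proposition~2]{McDuff1988}, so there is no internal proof to compare against, and I can only measure your outline against McDuff's argument. Your framework is the right one and is essentially hers: pass to a circle-valued momentum map $\mu\colon M\to\R/\Z$ (after adjusting $\omega$ so that the period group of $\iota_{\mathcal{X}_M}\omega$ is discrete), observe that in dimension $4$ the reduced spaces are surfaces and the Duistermaat--Heckman function is piecewise linear, and derive a contradiction between the periodicity of $\mu$ and the wall-crossing jumps caused by fixed components.

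However, you have left exactly the decisive step unproven, and in the form you state it, it is not even true. You ask to show that the ``signed wall-crossing contributions of the fixed components \ldots{} cannot cancel around the loop once a fixed point is present,'' but if both extremal fixed components (isolated points whose two weights have the same sign, or fixed surfaces) and saddle points (isolated points with weights of opposite signs) are allowed, the slope jumps of the Duistermaat--Heckman function occur with both signs and cancellation around the loop is not formally excluded. The missing ingredient is McDuff's lemma that a \emph{non-Hamiltonian} action admits no fixed component that is a local extremum of $\mu$: this is proved by an Atiyah-type connectedness argument for the level sets, showing that a local extremum forces a lift of $\mu$ to have image a proper subinterval of $\R$, i.e.\ forces $\mu$ to be genuinely real-valued and the action Hamiltonian. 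Once extrema are excluded, every fixed component in dimension $4$ is an isolated saddle with weights $(a,-b)$, $a,b>0$, and each critical level makes the slope of the Duistermaat--Heckman function jump by the strictly positive amount $\sum_i 1/(a_i b_i)$; a periodic piecewise-linear function cannot be convex with a strict slope increase somewhere, so there are no critical levels, contradicting $M^{S^1}\neq\varnothing$. Your sketch also passes quickly over the perturbation making the period lattice rank one; one must check, as McDuff does, that this can be done $S^1$-invariantly without changing the fixed point set or the (non-)Hamiltonian character of the action. Without the extrema-exclusion lemma and the explicit sign computation, what you have is a correct plan rather than a proof.
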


On the other hand, the following is a classical fact.

\begin{prop} \label{easy}
  A Hamiltonian $S^1$\--action on a compact 
symplectic $2n$\--manifold  has at least 
$n + 1$
fixed points. 
\end{prop}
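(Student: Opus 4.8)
The plan is to use the momentum map $\mu \colon M \to \R$ as a Morse(-Bott) function and to bound the number of its critical points below by the total Betti number of $M$, which the symplectic structure forces to be at least $n+1$.

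First I would identify the critical set of $\mu$ with the fixed point set. Since $\op{d}\mu = -\iota_{\mathcal{X}_M}\omega$ and $\omega$ is non\--degenerate, we have $\op{d}\mu(p)=0$ if and only if $\mathcal{X}_M(p)=0$, that is, $p \in M^{S^1}$. Next I would check that at an isolated fixed point $\mu$ has a non\--degenerate critical point: in a local model adapted to the action the tangent space splits into two\--dimensional real weight spaces with nonzero integer weights $w_1,\dots,w_n$, and the Hessian of $\mu$ is, up to normalization, $\mathrm{diag}(w_1,w_1,\dots,w_n,w_n)$, which is invertible precisely because every $w_i \neq 0$. Thus when $M^{S^1}$ is discrete, $\mu$ is a genuine Morse function whose critical points are exactly the fixed points, and its index at each such point equals $2\,\#\{i : w_i < 0\}$, which is always even. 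In general $\mu$ is Morse\--Bott with critical submanifolds the connected components of $M^{S^1}$.

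For the cohomological input I would use that $\int_M \omega^n \neq 0$, so $[\omega]^n \in H^{2n}(M;\R)$ is nonzero; writing $[\omega]^n = [\omega]^k \cup [\omega]^{n-k}$ shows $[\omega]^k \neq 0$ for every $0 \leq k \leq n$, hence $b_{2k}(M) \geq 1$ in each of these $n+1$ even degrees and $\sum_j b_j(M) \geq n+1$. The weak Morse inequalities then give that the number of critical points of $\mu$ is at least $\sum_j b_j(M) \geq n+1$; since these critical points are the fixed points, the discrete case follows, and if $M^{S^1}$ has a positive\--dimensional component the bound is immediate.

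I expect the main obstacle to be the non\--degeneracy part of the second step: setting up the equivariant local normal form and reading off from the weights that the Hessian of $\mu$ is non\--degenerate with even index. Once $\mu$ is known to be a Morse function with critical set $M^{S^1}$, the counting is routine from the standard nonvanishing of the powers $[\omega]^k$. I note also that the evenness of all indices makes $\mu$ a perfect Morse\--Bott function, which is not needed for this lower bound but records additional structure of $M^{S^1}$.
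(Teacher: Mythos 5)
Your proposal is correct and follows essentially the same route as the paper: identify $\operatorname{Crit}(\mu)$ with $M^{S^1}$, dispose of the positive\--dimensional case immediately, use the local weight decomposition to see that $\mu$ is Morse with even indices in the discrete case, and bound the number of critical points below by $\sum_k {\rm b}_k(M) \geq n+1$ via the nonvanishing of the classes $[\omega]^k$. The only (harmless) difference is that you invoke the weak Morse inequalities where the paper appeals to perfectness of $\mu$; as you note, the weaker statement already suffices for the lower bound.
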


Proposition~\ref{easy} follows from the fact the fixed points of
the $S^1$\--action are in correspondence with the critical
points of the the momentum map
$\mu$ of the action, and $\mu$ is a perfect Morse function which satisfies the Morse
inequalities; we provide a detailed proof in Section~\ref{fr}.  
In view of Proposition~\ref{easy}, Theorem~\ref{general0}
gives the following.

\begin{cor}[Frankel '59]\label{general2}
A symplectic K\"ahler $S^1$\--action on a compact and connected K\"ahler $2n$\--manifold
has at least $n+1$ fixed points.
\end{cor}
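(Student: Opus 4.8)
The plan is to derive the corollary directly from the two results just stated, namely Theorem~\ref{general0} and Proposition~\ref{easy}, by chaining Frankel's biconditional with the Morse-theoretic lower bound. Since $M$ is assumed compact, connected, and K\"ahler, and the $S^1$-action is assumed symplectic K\"ahler, both hypotheses needed to invoke these results are already in place, so no additional structure must be manufactured.

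First I would dispose of the trivial case: if the action has no fixed points there is nothing to prove, so assume $M^{S^1} \neq \emptyset$. By Theorem~\ref{general0}, the mere presence of a fixed point forces the action to be Hamiltonian; this is exactly the nontrivial implication of Frankel's result. Consequently there exists a momentum map $\mu \colon M \to \R$ with $-\op{d}\!\mu = \iota_{\mathcal{X}_M}\omega$, and the fixed point set $M^{S^1}$ is identified with the critical set of $\mu$.

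Next I would apply Proposition~\ref{easy} to the now-Hamiltonian action. Because $M$ is compact and connected of dimension $2n$, that proposition yields at least $n+1$ fixed points, which is precisely the asserted bound. The mechanism behind Proposition~\ref{easy} is that $\mu$ is a perfect Morse-Bott function whose critical set is $M^{S^1}$, while the nonvanishing of the powers of $[\omega]$ in $H^{2i}(M;\R)$ for $0 \leq i \leq n$ guarantees a contribution in each of the $n+1$ even degrees; I would cite the detailed argument promised in Section~\ref{fr} rather than reprove it here.

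The step that carries all the mathematical weight, and hence the main obstacle, is Frankel's biconditional (Theorem~\ref{general0}): it is what converts the purely topological hypothesis ``has a fixed point'' into the symplectic conclusion ``is Hamiltonian''. The K\"ahler hypothesis is essential at this stage and cannot be dropped in general, as the non-Hamiltonian symplectic examples mentioned in the introduction make clear. Once that implication is granted, the corollary follows immediately by feeding the resulting momentum map into Proposition~\ref{easy}.
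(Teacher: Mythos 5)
Your proof is correct and is exactly the paper's argument: the corollary is placed immediately after Theorem~\ref{general0} and Proposition~\ref{easy} precisely because it follows by chaining them, with Frankel's biconditional upgrading the existence of a fixed point to the existence of a momentum map and Proposition~\ref{easy} then supplying the $n+1$ bound. The only caveat is your dismissal of the fixed-point-free case as ``nothing to prove'' --- there the conclusion is actually false (e.g.\ a circle subgroup acting by translations on a K\"ahler torus has no fixed points), so the corollary must be read as implicitly assuming $M^{S^1}\neq\emptyset$; this imprecision sits in the statement itself rather than in your argument, and the paper's own one-line derivation carries the same implicit assumption.
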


We recall that a manifold $M$ is called \emph{unitary} if the stable tangent bundle is endowed with a complex
structure. Thus, 
every almost complex manifold, and hence every symplectic manifold, is unitary.
For a real number $r$, let $[r]$ denotes its integer part. The following can be found in \cite{Ko}.

\begin{conjL}[Kosniowski '79] \label{K1979}
Let $M$  be a  $2n$\--dimensional unitary $S^1$\--manifold with isolated fixed points. If 
 $M$ does not bound equivariantly, then the number of fixed points is at least $f(n)$, where
 $f(n)$ is a linear function, expected to be equal to $[n/2]+1$. 
\end{conjL}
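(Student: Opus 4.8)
The statement is a conjecture that is open in general, so a realistic ``proof'' can only hope to confirm it in a restricted setting and for specific values of $n$; this is exactly the route I would take, following the reduction announced in Theorem~\ref{ab}. The starting observation is that for an $S^1$\--action with isolated fixed points the only invariant data that can enter a lower bound on the cardinality of $M^{S^1}$ are the weights of the isotropy representation at each fixed point. I would therefore package this data combinatorially: at every fixed point $p$ the tangent space splits into $n$ complex weight spaces with nonzero integer weights, and I let $N_k$ denote the number of fixed points at which exactly $k$ of these weights are negative, $0 \leq k \leq n$. The quantity to be bounded from below is then $|M^{S^1}| = \sum_{k=0}^{n} N_k$, and the hypothesis that $M$ does not bound equivariantly guarantees $M^{S^1}\neq\emptyset$, i.e.\ that the $N_k$ do not all vanish.

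The core of the argument is to produce enough linear relations among the $N_k$ to force $\sum_k N_k$ to be large. Here I would invoke \cite[Theorem 1.2]{GoSa12} through Theorem~\ref{ab}: under the hypothesis that $c_1$ or $c_{n-1}$ vanishes, the equivariant rigidity of the relevant characteristic numbers (the $\chi_y$\--genus in Kosniowski's original approach, or the Chern number relations of \cite{GoSa12}) yields a system of linear equations satisfied by the vector $(N_0,\dots,N_n)$. Together with the sign constraints $N_k \geq 0$, the integrality $N_k \in \Z$, and the nontriviality coming from the non\--bounding hypothesis, these relations cut out a set of admissible fixed\--point profiles. Minimizing the linear objective $\sum_k N_k$ over this set is precisely an integer linear program (Problem~\ref{op}), whose optimal value is $\mathcal{B}(n)$.

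For each of the dimensions $2n\in\{8,10,12,14,18,20,22\}$ I would solve this program by hand: with $n$ fixed the number of constraints is small, so one can enumerate the finitely many candidate profiles $(N_0,\dots,N_n)$ compatible with the relations, select the one minimizing $\sum_k N_k$, and then check that this minimum is at least $[n/2]+1$. Because the relations are symmetric and sparse for small $n$, this is a manageable finite case analysis rather than a genuine optimization over a large polytope, which is why the listed dimensions are tractable while $2n=16$ and large $n$ are not settled by the same bookkeeping.

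The main obstacle is precisely the transition from these finite confirmations to a uniform statement. The number of variables and of constraints both grow with $n$, and there is no reason a priori for the linear\--programming relaxation of Problem~\ref{op} to be integral, so an integrality gap may separate the honest optimum $\mathcal{B}(n)$ from any easily computed continuous bound. Establishing a closed\--form solution matching $f(n)=[n/2]+1$ for all $n$ would require either a duality argument exhibiting a feasible dual solution that certifies the bound, or a structural description of the extremal profiles; I expect this, rather than any single dimension, to be where the difficulty concentrates.
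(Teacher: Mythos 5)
Your proposal matches the paper's approach: the statement is an open conjecture, and the paper only confirms it for the subclass with $\int_M {\rm c}_1^M {\rm c}_{n-1}^M = 0$ by exactly the reduction you describe --- encoding the fixed-point data in the $N_k$, imposing the linear relation from \cite[Theorem 1.2]{GoSa12} together with $N_i = N_{n-i}$ and $N_0 = N_n = 0$ for non-Hamiltonian actions, and solving the resulting integer program by hand in the listed dimensions. One small correction: $2n=16$ is excluded not because the program is intractable there, but because its optimum $\mathcal{B}(8)=3$ falls below $[8/2]+1=5$, so the method simply fails to certify the conjectured bound in that dimension.
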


\subsection{Some recent contributions}
Let $S^1$ act on a compact symplectic manifold $(M,\, \omega)$
with momentum map $\mu \colon M \to \R$.
Because the set of compatible almost complex structures
$J \colon \op{T}\!M \to \op{T}\!M$ is contractible,  there is a well\--defined
total Chern class of the tangent bundle ${\rm T}M$, which we denote by
${\rm c}^M=\sum_{j=0}^n{\rm c}_j^M.$ For every fixed point $p$ there is a
well\--defined 
multiset of integers, namely the \emph{weights} of the $S^1$ action on ${\rm T}M|_{p}$.  Let 
 $\op{c}_1(M)|_p$ be the first equivariant Chern class of ${\rm T}M$
 at  $p \in M^{S^1}$, which one can naturally identify with an integer 
 $\op{c}_1(M)(p)$: the sum
 of the weights at $p$.
The map
 $$
 \op{c}_1(M) \colon M^{S^1} \to \Z, \,\,\,\,\,\,\,p \mapsto \op{c}_1(M)(p) \in \Z,
 $$
 is called 
 the \emph{Chern class map} of $M$. Using the Atiyah\--Bott and Berline\--Vergne localization formula in equivariant
cohomology and  properties of Poincar\'e polynomials and Morse functions,
Pelayo and Tolman (\cite{PeTo2010}) proved the following result.

\begin{theorem}[\cite{PeTo2010}]\label{general}
Let $S^1$  act symplectically
on a compact  symplectic $2n$\--dimensional manifold $M$.
If ${\rm c}_1(M) \colon M^{S^1}\to \Z$ is somewhere injective\footnote{Let $f\colon X \to Y$ be 
a map between sets. We recall that
$f$ is \emph{somewhere injective}
if 
there is  $y\in Y$ such that $f^{-1}(\{y\})$
is the singleton.}  then
 the $S^1$\--action has at least  $n + 1$ fixed points.
\end{theorem}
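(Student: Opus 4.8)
The plan is to feed powers of the equivariant first Chern class into the Atiyah--Bott--Berline--Vergne (ABBV) localization formula, read off a family of vanishing moment identities relating the Chern numbers $\op{c}_1(M)(p)$ to the weights, and then run a Vandermonde argument. The somewhere injectivity hypothesis enters at exactly one place: it guarantees that a certain signed combination does not collapse to zero, which forces $\op{c}_1(M)$ to take at least $n+1$ distinct values and hence forces at least $n+1$ fixed points. An alternative route would go through the perfect Morse theory of a momentum map, but that is available only in the Hamiltonian case, whereas localization applies to any symplectic action.

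First I would record the localization identities. By hypothesis there is $y \in \Z$ with $\op{c}_1(M)^{-1}(\{y\}) = \{p_0\}$; since a positive-dimensional fixed component is never a single point, $p_0$ is isolated and its $n$ weights $w_1(p_0), \dots, w_n(p_0)$ are all nonzero. For an isolated fixed point $p$ write $a_p := \op{c}_1(M)(p) = \sum_{j} w_j(p)$ and $W_p := \prod_{j=1}^n w_j(p)$. Let $x$ generate $H^2_{S^1}(\mathrm{pt})$. Then $\op{c}_1^{S^1}(M)$ restricts at $p$ to $a_p\,x$ and the equivariant Euler class of $\op{T}\!M|_p$ equals $x^n W_p$, so ABBV applied to $(\op{c}_1^{S^1}(M))^k$ gives $\int_M (\op{c}_1^{S^1}(M))^k = x^{k-n}\sum_p a_p^k/W_p$. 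For $0 \le k \le n-1$ the left-hand side lies in $H^{2k-2n}_{S^1}(\mathrm{pt}) = 0$, whence
\[
\sum_{p} \frac{a_p^{\,k}}{W_p} = 0, \qquad k = 0,1,\dots,n-1.
\]

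The main step is a Vandermonde dichotomy. Let $v_1, \dots, v_m$ be the distinct values taken by $\op{c}_1(M)$ and set $c_i := \sum_{p\,:\,a_p=v_i} W_p^{-1}$; the identities above become $\sum_{i=1}^m c_i\,v_i^{\,k} = 0$ for $0 \le k \le n-1$. Suppose $m \le n$. Using the first $m$ relations, the square matrix $(v_i^{\,k})_{0 \le k \le m-1,\ 1 \le i \le m}$ is a Vandermonde matrix in the distinct nodes $v_1, \dots, v_m$, hence invertible, forcing $c_i = 0$ for every $i$. But the value $y$ is attained by the single fixed point $p_0$, so its group weight is $c_{i_0} = W_{p_0}^{-1} \ne 0$, a contradiction. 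Therefore $m \ge n+1$, and since distinct Chern values occur at distinct fixed points, the action has at least $n+1$ fixed points.

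The hard part, and the precise reason the hypothesis cannot be dropped, is the threat of total cancellation $\sum_{p\,:\,a_p=v} W_p^{-1} = 0$ at every value $v$: this would render the moment identities vacuous, and somewhere injectivity is exactly what rules it out at $v=y$ by supplying an uncancelled atom. A secondary technical point is the treatment of non-isolated fixed components, for which the ABBV contribution of a component $F$ is an integral $\int_F (\op{c}_1^{S^1}(M)|_F)^k / e_{S^1}(N_F)$ rather than a single rational term; one must check that grouping by the locally constant value of $\op{c}_1(M)$ on $M^{S^1}$ preserves the Vandermonde structure, with the isolated point $p_0$ furnished by the hypothesis always providing the decisive nonzero contribution.
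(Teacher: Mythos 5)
Your proof is correct and is essentially the same argument as the source: the paper gives no proof of Theorem~\ref{general}, quoting it from Pelayo--Tolman and describing their method as the Atiyah--Bott--Berline--Vergne localization formula, which is precisely your scheme of extracting the vanishing moments $\sum_p a_p^k/W_p=0$ for $0\le k\le n-1$ and breaking them with a Vandermonde matrix at the uncancelled atom supplied by somewhere injectivity. The only remark worth making is that your closing paragraph overcomplicates the non-isolated case: if $M^{S^1}$ has a positive-dimensional component then it is infinite and the bound is vacuous, so there is no need to verify that the (in fact confluent) Vandermonde structure survives the grouping by components.
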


Prior to Theorem \ref{general}, Tolman and Weitsman had proven a 
remarkable result for semifree actions.

\begin{theorem}[\cite{tolmanweitsman}] \label{TTS}
Let $S^1$  act symplectically and semifreely 
on a compact  symplectic $2n$\--dimensional manifold $M$  with isolated fixed points.
Then the $S^1$\--action has at least  $n + 1$ fixed points.
\end{theorem}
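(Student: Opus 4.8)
The plan is to show that for each integer $k$ with $0 \leq k \leq n$ there is at least one fixed point at which exactly $k$ of the weights are negative; since distinct values of $k$ force distinct fixed points, this immediately yields the required $n+1$ of them. Because the action is semifree, the $S^1$\--representation on ${\rm T}M|_p$ at each $p \in M^{S^1}$ has all weights equal to $\pm 1$, so $p$ is completely described by the integer $\lambda_p \in \{0,1,\dots,n\}$ of negative weights. Writing $N_k$ for the number of fixed points with $\lambda_p = k$, the whole theorem reduces to the claim that $N_k \geq 1$ for every $0 \leq k \leq n$.

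First I would reduce to the Hamiltonian case. This is the step that genuinely uses semifreeness together with the discreteness of $M^{S^1}$: one argues that the class of the closed one\--form $\iota_{\mathcal{X}_M}\omega$ in $H^1(M;\R)$ vanishes (by an averaging/localization argument in equivariant cohomology in the spirit of Frankel, Theorem~\ref{general0}), so that a momentum map $\mu \colon M \to \R$ exists. Granting this, $\mu$ is a Morse-Bott function whose critical set is exactly $M^{S^1}$, with Morse index $2\lambda_p$ at $p$, and — as in the proof of Proposition~\ref{easy} — it is a \emph{perfect} Morse function because all indices are even. Consequently $b_{2k}(M) = N_k$ for all $k$, and all odd Betti numbers vanish.

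Two endpoints are then immediate. Since $M$ is connected, the minimum and maximum of $\mu$ are each attained at a single fixed point, giving $N_0 = b_0 = 1$ and $N_n = b_{2n} = 1$; moreover reversing the circle action negates every weight and hence interchanges $N_k$ with $N_{n-k}$, so the profile $(N_0,\dots,N_n)$ is palindromic. It remains only to populate the intermediate degrees.

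The main obstacle is to rule out gaps, i.e.\ to prove $N_k \geq 1$ for $0 < k < n$: for a general Hamiltonian action indices may be skipped, so the semifree hypothesis must enter essentially here. My plan is to analyze the symplectic reduced spaces $M_t = \mu^{-1}(t)/S^1$ at regular values $t$. Each is a smooth symplectic $2(n-1)$\--manifold, and each is connected by Atiyah connectedness for Hamiltonian $S^1$\--actions. Crossing a critical value carrying a fixed point with $k$ negative weights performs, in the semifree local model on $\mathbb{C}^n$, a controlled blow\--down/blow\--up surgery replacing a copy of $\mathbb{CP}^{k-1}$ (the projectivized negative directions) by a copy of $\mathbb{CP}^{n-k-1}$ (the positive directions), with an explicit effect on cohomology. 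By tracking how $H^*(M_t)$ builds up starting from the single minimum and matching it against the accumulated contributions $N_0, N_1, \dots$, one forces every even degree up to $2n$ to be nonzero, giving $N_k \geq 1$ for all $k$. Carrying this surgery analysis through carefully, and in particular verifying that connectivity of $M_t$ is preserved across each crossing, is the crux of the argument.
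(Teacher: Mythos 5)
The paper gives no proof of this theorem: it is quoted from \cite{tolmanweitsman}, so your attempt has to be measured against the argument in that reference. The decisive gap in your proposal is its very first step, the reduction to the Hamiltonian case. You assert that the class of $\iota_{\mathcal{X}_M}\omega$ in $H^1(M;\R)$ vanishes ``by an averaging/localization argument in equivariant cohomology in the spirit of Frankel.'' No such argument is available: Frankel's proof of Theorem~\ref{general0} uses the K\"ahler identities (harmonicity of $\iota_{\mathcal{X}_M}\omega$ together with Hodge theory), which have no analogue on a general symplectic manifold, and averaging only produces an invariant representative of the same cohomology class, not exactness. Indeed, whether a symplectic $S^1$-action with nonempty discrete fixed point set must be Hamiltonian is precisely the open problem discussed in the introduction of this paper; the semifree case is one of the very few where the answer is known, and \emph{that} is the actual content of the Tolman--Weitsman theorem. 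Their proof assumes the action is non-Hamiltonian, works with the circle-valued momentum map $\mu\colon M\to \R/\Z$ (cf.\ Theorem~\ref{main}), and runs an induction on the cohomology of the reduced spaces across critical levels --- using exactly the semifree wall-crossing surgery (a $\mathbb{C}P^{k-1}$ blown down, a $\mathbb{C}P^{n-k-1}$ blown up) that you describe --- to reach a contradiction. So you have the right tool, but you deploy it in the wrong place.

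The second half of your plan is misdirected, though not incorrect: once Hamiltonianness is granted, the bound $n+1$ is immediate and requires neither semifreeness nor any surgery analysis. As in Proposition~\ref{easy}, $\mu$ is a perfect Morse function with only even indices, so $N_k=b_{2k}(M)$, and $b_{2k}(M)\geq 1$ for all $0\leq k\leq n$ because the classes $[\omega^k]$ are nonzero. Your worry that ``for a general Hamiltonian action indices may be skipped'' is unfounded in the compact symplectic setting for exactly this reason. In summary: the step you treat as routine (that a non-Hamiltonian semifree action with isolated fixed points cannot have any fixed points at all) is the whole theorem, while the step you treat as the crux is already covered by Proposition~\ref{easy}.
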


The following result complements Theorems~\ref{general} and \ref{TTS} in low dimensions.

\begin{theorem}[\cite{PeTo2010}]\label{maintheorem}
Let $S^1$ act symplectically
on a compact  symplectic manifold $M$. 
Suppose that the fixed point set $M^{S^1}$
is nonempty.
Then
there are at least  two fixed points. If
$\dim M\ge 8$, 
then  there are at least three fixed points.
Moreover, if ${\rm c}_1(M) \colon M^{S^1}\to \Z$ is not identically zero and 
$\dim M\ge 6$, 
then there are at least four fixed points.
\end{theorem}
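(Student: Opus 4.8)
The plan is to reduce everything to the case of isolated fixed points and then extract arithmetic constraints on the weights from two sources: the Atiyah--Bott--Berline--Vergne localization formula, and the rigidity of the equivariant $\chi_y$-genus, which encodes a Poincar\'e-duality symmetry of the fixed-point data. First I would observe that if $M^{S^1}$ is not finite it contains a positive-dimensional component, hence infinitely many points, and all three bounds hold trivially; so I may assume $M^{S^1}$ is a finite set $\{p_1,\dots,p_N\}$, and at each $p$ the isotropy representation on ${\rm T}M|_p$ has $n$ nonzero integer weights $w_1^p,\dots,w_n^p$. Writing $c_p=\sum_i w_i^p$ for the value $\op{c}_1(M)(p)$ of the Chern class map and $e_p=\prod_i w_i^p$, localization applied to powers of the equivariant first Chern class in equivariant degrees $2k<2n$ yields the family of relations
\[
\sum_{p\in M^{S^1}}\frac{c_p^{\,k}}{e_p}=0,\qquad 0\le k\le n-1,
\]
together with the analogous vanishing for every characteristic number of equivariant degree below $2n$.

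I would then record the topological input. Let $\lambda_p$ be the number of negative weights at $p$, so that $\mathrm{sign}(e_p)=(-1)^{\lambda_p}$, and set $N_j=\#\{p:\lambda_p=j\}$. Rigidity of the equivariant $\chi_y$-genus forces the palindromic symmetry $N_j=N_{n-j}$, while $\sum_j N_j=N$. Part~1 is now immediate: the $k=0$ relation reads $\sum_p (-1)^{\lambda_p}|e_p|^{-1}=0$, so both parities of $\lambda_p$ must occur, whence $N\ge 2$.

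For Part~2 I would assume $N=2$. The palindrome forces the two values of $\lambda$ to be $\{j,n-j\}$, while the two-term $k=0$ relation forces $(-1)^{\lambda_{p_1}}$ and $(-1)^{\lambda_{p_2}}$ to have opposite signs, so $\lambda_{p_1}+\lambda_{p_2}=n$ is odd and $n$ must be odd; this already settles every even $n$. The relations for $k=1,\dots,n-1$ then force the two weight multisets to have equal elementary symmetric functions in all degrees $<n$ and opposite top products, i.e. two monic integer polynomials with nonzero integer roots differing only in their constant terms, which are of opposite sign; ruling this out for $n\ge 4$ is the combinatorial core. Part~3 adds the hypothesis that some $c_p\ne 0$: here I would feed the nonvanishing of $\op{c}_1$ into the degree-one relation and its higher analogues to eliminate the remaining two- and three-point configurations for $n\ge 3$.

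The main obstacle I anticipate is precisely this last combinatorial step. The localization relations together with the palindrome admit many formally consistent small configurations---for instance, for $n=3$ with $c_p\equiv 0$ a genuine two-point solution exists (take weight multisets $\{1,2,-3\}$ and $\{-1,-2,3\}$)---so the argument must exploit the integrality and nonvanishing of the weights, and in Part~3 the nonvanishing of $\op{c}_1$, to show that no surviving candidate can be realized. Organizing this weight bookkeeping so that it closes uniformly across the relevant dimensions, rather than through an ad hoc case analysis, is where the real difficulty lies.
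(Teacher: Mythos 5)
This statement is quoted from \cite{PeTo2010} as a preliminary; the present paper does not prove it, so your attempt can only be measured against the method the authors attribute to that reference (Atiyah--Bott--Berline--Vergne localization together with Morse theory for the momentum map and the symmetry $N_i=N_{n-i}$) and against its own internal completeness. On that score there is a genuine gap, and you have in fact identified it yourself. What you actually establish is: the reduction to isolated fixed points; the bound $|M^{S^1}|\ge 2$ from $\sum_p 1/e_p=0$; and the impossibility of exactly two fixed points when $n$ is even (via the palindrome $N_j=N_{n-j}$ forcing $\lambda_{p_1}+\lambda_{p_2}=n$ against the opposite-sign requirement from the $k=0$ relation). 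Everything else --- exactly two points for odd $n\ge 5$, and the entire four-point statement of Part~3, including the exclusion of three-point configurations --- is deferred to a ``combinatorial core'' that you explicitly do not carry out. A proof proposal whose hardest step is flagged as the anticipated obstacle is not a proof.

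Moreover, the route you propose for closing that gap is probably not viable as stated. Reducing the two-point case to ``two monic integer polynomials with nonzero integer roots agreeing in all coefficients except constant terms of opposite sign'' is an ideal Prouhet--Tarry--Escott-type problem; such configurations are not obviously nonexistent for odd $n\ge 5$, and your own $n=3$ example shows the localization identities alone do not rule them out. The missing ingredient is geometric, not arithmetic: one needs (i) the dichotomy that a Hamiltonian action already has $\ge n+1$ fixed points by Morse theory on $\mu$ (Proposition~\ref{easy}), so one may assume the action is non-Hamiltonian, whence $N_0=N_n=0$ by \eqref{N0}; and (ii) the weight-matching lemma coming from isotropy submanifolds $M^{\Z_k}$, which in the two-fixed-point case forces the weight multiset at $p_2$ to be the negative of that at $p_1$, collapsing your polynomial problem to $Q(x)=-P(-x)$ and making it tractable. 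Without these inputs, and without any concrete mechanism for exploiting $\op{c}_1\not\equiv 0$ beyond ``feed it into the degree-one relation,'' Parts~2 and~3 remain unproved.
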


The literature on non\--necessarily Hamiltonian $S^1$\--symplectic actions 
is extensive, see \cite{tolmanweitsman, Feldman2001, 
Lin2007, CHS10, Lin2007, LP12, go, Go2006, LL11, LiLi2010} and the 
references therein for related results. Non\--necessarily Hamiltonian $(S^1)^k$\--actions
on manifolds of dimension $2k$ where studied in \cite{DP07,Pe10}.

\section{An  integer programming problem} \label{s3}

\subsection{Tools}

In order to state and prove our results we need the following definition
and the two previous results which follow it.

\begin{definition}
Let $(M,\omega)$ be a compact symplectic manifold on which a circle
$S^1$ acts symplectically with nonempty discrete fixed point set $M^{S^1}$.  For $i \in \Z$ let 
$$
N_i:=\Big\lvert \big\{ p \in M^{S^1} \ \big| \ \lambda_p = i \big\} \Big\rvert,
$$
where  $\lambda_p$ is the number of negative weights at  $p$
for all $p \in M^{S^1}$.
\end{definition}

The following was proven in Pelayo\--Tolman~\cite{PeTo2010}.

\begin{theorem}[\cite{PeTo2010}] \label{peto}
Let $S^1$ act symplectically
on a compact symplectic $2n$\--manifold with isolated fixed points.
Then
\begin{eqnarray*} 
N_i=N_{n-i} \quad 
\forall \ i \in \Z.
\end{eqnarray*}
\end{theorem}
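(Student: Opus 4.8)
The plan is to reduce the claimed equality to the palindromic symmetry of a single genus and to extract that symmetry from a rigidity theorem. Fix an $S^1$\--invariant almost complex structure $J$ compatible with $\omega$. Since the fixed points are isolated, at each $p\in M^{S^1}$ the tangent space splits into $n$ complex lines on which $S^1$ acts with nonzero integer weights $w_1^p,\dots,w_n^p$, and $\lambda_p=\#\{j: w_j^p<0\}$, so that $N_i=\#\{p: \lambda_p=i\}$ and $N_i=0$ for $i\notin\{0,\dots,n\}$. Writing $P(x)=\sum_{i} N_i\,x^i$, the assertion $N_i=N_{n-i}$ is exactly the statement that $P$ is palindromic, $P(x)=x^nP(1/x)$. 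In the Hamiltonian case this is immediate: $2\lambda_p$ is the Morse index of the momentum map $\mu$ at $p$, $\mu$ is a perfect Morse function, and Poincar\'e duality on $H^*(M)$ forces $N_i=b_{2i}=b_{2n-2i}=N_{n-i}$. The difficulty here is that the action is not assumed Hamiltonian, so there is no global $\mu$ and no Morse function to appeal to; the symmetry must instead be produced from purely equivariant data.

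The engine I would use is the equivariant Hirzebruch $\chi_y$\--genus together with its rigidity. By the $\chi_y$ Lefschetz fixed point formula, for $g\in S^1$ one has
\[
\chi_y(M)(g)=\sum_{p\in M^{S^1}}\ \prod_{j=1}^{n}\frac{1+y\,g^{-w_j^p}}{1-g^{-w_j^p}} .
\]
The key input is that this equivariant genus is \emph{rigid}: it is independent of $g$ (a classical rigidity property of the $\chi_y$\--genus, a degenerate case of the rigidity of elliptic genera). Consequently the right\--hand side, a priori a character of $S^1$, is a constant, and hence extends to a constant function of $g$ on all of $\CM\setminus\{0\}$. This is what converts a local fixed\--point computation into a global symmetry.

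Granting rigidity, I would evaluate the constant in two ways by letting $g\to\infty$ and $g\to 0$. A short computation of a single factor shows that as $g\to\infty$ it tends to $1$ if $w_j^p>0$ and to $-y$ if $w_j^p<0$, while as $g\to 0$ the roles reverse. Therefore
\[
\lim_{g\to\infty}\prod_{j=1}^{n}\frac{1+y\,g^{-w_j^p}}{1-g^{-w_j^p}}=(-y)^{\lambda_p},
\qquad
\lim_{g\to 0}\prod_{j=1}^{n}\frac{1+y\,g^{-w_j^p}}{1-g^{-w_j^p}}=(-y)^{\,n-\lambda_p},
\]
and summing over $p$ gives $\sum_i N_i(-y)^i=\chi_y(M)=\sum_i N_i(-y)^{\,n-i}$. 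Setting $x=-y$ and comparing coefficients yields $N_i=N_{n-i}$ for every $i\in\Z$. Conceptually the two limits correspond to the action and its inverse: reversing the circle sends $g\mapsto g^{-1}$ and $w_j^p\mapsto -w_j^p$, hence $\lambda_p\mapsto n-\lambda_p$, and rigidity is precisely what forces the two resulting expressions for the same invariant to coincide.

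I expect the main obstacle to be the rigidity step, together with the precise form of the fixed\--point contribution in the \emph{almost} complex, non\--integrable setting: one must justify that the equivariant $\chi_y$\--index is well defined and rigid when $J$ is only an $S^1$\--invariant compatible almost complex structure (so that the local contributions are the weight factors displayed above), rather than invoking honest Dolbeault cohomology groups. Once rigidity is in hand, the remaining steps — the one\--factor limit computation and the coefficient comparison — are routine.
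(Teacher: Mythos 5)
Your argument is correct, but note that the paper you were asked to match contains no proof of this statement: Theorem~\ref{peto} is imported verbatim from \cite{PeTo2010}, so there is nothing internal to compare against. Your route --- fix an invariant compatible $J$, write the equivariant Hirzebruch $\chi_y$-genus via the Atiyah--Segal--Singer fixed point formula, invoke rigidity to make the character constant, and compare the two limits $g\to 0$ and $g\to\infty$ to get $\sum_i N_i(-y)^i=\sum_i N_i(-y)^{n-i}$ --- is the standard proof of this symmetry and is essentially how it is established in the cited source; the limit computation and the coefficient comparison are carried out correctly. The one genuine external input, which you rightly flag, is the rigidity of the equivariant $\chi_y$-genus for $S^1$-actions on compact almost complex (indeed unitary) manifolds; this is a known theorem (Kosniowski in the holomorphic case, and in general via Atiyah--Hirzebruch/Hattori-type index arguments), so citing it is no worse than what the paper itself does in quoting Theorem~\ref{GS}. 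The only point worth making explicit is that the fixed point contributions are rational functions of $g$ with poles only at finitely many roots of unity, so constancy on the complement plus existence of the limits along the positive real axis legitimizes the evaluation at $0$ and $\infty$.
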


The following corresponds to \cite[Theorem~1.2]{GoSa12}.

\begin{theorem}[\cite{GoSa12}]\label{GS}
Let $(M,J)$ be an almost complex compact and connected manifold equipped with an $S^1$\--action which preserves the almost complex structure $J$ and has a discrete fixed point set. For every $p = 0,\ldots, n$, let $N_p$ be the number of fixed points with exactly $p$ negative weights. Then
$$
\int_M {\rm c}_1^M {\rm c}_{n-1}^M=\sum_{p=0}^n N_p\Big(6p(p-1)+\frac{5n-3n^2}{2}  \Big),
$$
where ${\rm c}_1^M$ and ${\rm c}_{n-1}^M$ are respectively the first and
$(n-1)$ Chern classes of $M$.
\end{theorem}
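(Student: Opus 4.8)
The plan is to derive the identity from two independent ingredients and then combine them. Write the Hirzebruch $\chi_y$--genus as $\chi_y(M)=\sum_{k=0}^{n}\chi^{k}(M)\,y^{k}$, so that, with $x_1,\dots,x_n$ the Chern roots of $({\rm T}M,J)$,
$$
\chi_y(M)=\int_M\prod_{i=1}^{n}\frac{x_i\,(1+y\,e^{-x_i})}{1-e^{-x_i}}.
$$
The two ingredients are: \emph{(i)} the localization identity $\chi_y(M)=\sum_{p\in M^{S^1}}(-y)^{\lambda_p}=\sum_{p=0}^{n}N_p\,(-y)^{p}$; and \emph{(ii)} a universal Chern--number identity of Libgober--Wood type, namely $\int_M{\rm c}_1^M{\rm c}_{n-1}^M=6\,D+\frac{5n-3n^2}{2}\int_M{\rm c}_n^M$, where $D:=\big(\frac{d^2}{dy^2}\chi_y(M)\big)\big|_{y=-1}$.

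For \emph{(i)} I would combine the Atiyah--Bott holomorphic Lefschetz formula with the rigidity of the $\chi_y$--genus. Since the action preserves $J$ and the fixed set is discrete, all weights are nonzero and localization writes the equivariant $\chi_y$--genus as $\sum_{p}\prod_{j=1}^{n}\frac{1+y\,e^{-w_j^p t}}{1-e^{-w_j^p t}}$, with $w_1^p,\dots,w_n^p$ the weights at $p$ and $t$ the equivariant parameter; rigidity makes this constant in $t$, hence equal to $\chi_y(M)$. Letting $t\to+\infty$, a factor with positive weight tends to $1$ and a factor with negative weight tends to $-y$, so the contribution of $p$ tends to $(-y)^{\lambda_p}$, giving \emph{(i)}. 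Reading off the $y$--polynomial then yields $\int_M{\rm c}_n^M=\chi(M)=\sum_pN_p$ and
$$
D=\Big(\tfrac{d^2}{dy^2}\sum_pN_p(-y)^p\Big)\Big|_{y=-1}=\sum_{p}p(p-1)\,N_p.
$$

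For \emph{(ii)} the key observation is that the Hirzebruch factor splits as $\Phi(x,y)=A(x)+y\,B(x)$, with $A(x)=\frac{x}{1-e^{-x}}$ and $B(x)=\frac{x\,e^{-x}}{1-e^{-x}}$, and that $A(x)-B(x)=x$. Hence at $y=-1$ the integrand collapses to $\prod_i x_i$, recovering $\chi_{-1}(M)=\int_M{\rm c}_n^M$. Differentiating $\prod_i(A(x_i)+yB(x_i))$ twice in $y$ and setting $y=-1$ turns every surviving factor into $x_k$, so that
$$
D=\int_M\sum_{i\neq j}B(x_i)\,B(x_j)\prod_{k\neq i,j}x_k.
$$
Since $B(x)=1-\tfrac{x}{2}+\tfrac{x^2}{12}-\cdots$, the degree--two part of $B(x_i)B(x_j)$ is $\frac{x_i^2+x_j^2}{12}+\frac{x_ix_j}{4}$; using the symmetric--function identities $\sum_{i\neq j}x_i^2\prod_{k\neq i,j}x_k={\rm c}_1^M{\rm c}_{n-1}^M-n\,{\rm c}_n^M$ and $\sum_{i\neq j}x_ix_j\prod_{k\neq i,j}x_k=n(n-1)\,{\rm c}_n^M$, one gets $D=\tfrac16\int_M{\rm c}_1^M{\rm c}_{n-1}^M+\tfrac{3n^2-5n}{12}\int_M{\rm c}_n^M$, which is \emph{(ii)} after solving for $\int_M{\rm c}_1^M{\rm c}_{n-1}^M$.

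Finally I would substitute $D=\sum_pp(p-1)N_p$ and $\int_M{\rm c}_n^M=\sum_pN_p$ from \emph{(i)} into \emph{(ii)} to obtain
$$
\int_M{\rm c}_1^M{\rm c}_{n-1}^M=6\sum_pp(p-1)N_p+\tfrac{5n-3n^2}{2}\sum_pN_p=\sum_{p=0}^{n}N_p\Big(6p(p-1)+\tfrac{5n-3n^2}{2}\Big).
$$
I expect the main obstacle to be ingredient \emph{(ii)}: one must recognise that the correct topological quantity is the second $y$--derivative of $\chi_y$ at $y=-1$ (the value $y=-1$ being exactly what triggers the collapse $A-B=x$, and the quadratic $p(p-1)$ in the target matching a second derivative), and then check that after differentiating and evaluating only ${\rm c}_1^M{\rm c}_{n-1}^M$ and ${\rm c}_n^M$ survive, with the coefficients forced by the Bernoulli expansion of $B$. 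Ingredient \emph{(i)} is classical but must be invoked in the almost complex, not necessarily Hamiltonian, setting, e.g.\ via the ${\rm Spin}^c$ Dirac operator.
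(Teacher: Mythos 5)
You should note first that the paper contains no proof of this statement: Theorem~\ref{GS} is imported verbatim from \cite[Theorem~1.2]{GoSa12}, and Remark~\ref{xxy} only records that the proof there uses equivariant $K$\--theory. So there is no in\--paper argument to compare against, and your proposal has to be judged against the cited source. On that basis your argument is correct, and it is in substance the argument of \cite{GoSa12}: the identity is exactly the combination of (a) the localization/rigidity statement $\chi_y(M)=\sum_{p}N_p(-y)^{\lambda_p}$, equivalently $\chi^p(M)=(-1)^pN_p$ (this is where the $K$\--theoretic or index\--theoretic input lives; in the almost complex, possibly non\--Hamiltonian setting it rests on the rigidity of the indices of the twisted ${\rm Spin}^c$ Dirac operators, cf.\ Hattori [Ha83] in the bibliography, so it is a citable theorem rather than a free fact, as you correctly flag), with (b) the Libgober--Wood identity $\frac{d^2}{dy^2}\chi_y(M)\big|_{y=-1}=\frac{1}{6}\int_M {\rm c}_1^M{\rm c}_{n-1}^M+\frac{3n^2-5n}{12}\int_M {\rm c}_n^M$. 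Your derivation of (b) checks out: at $y=-1$ each undifferentiated Hirzebruch factor collapses to $x_k$ via $A(x)-B(x)=x$, only the degree\--two part $\frac{x_i^2+x_j^2}{12}+\frac{x_ix_j}{4}$ of $B(x_i)B(x_j)$ survives against $\prod_{k\neq i,j}x_k$, and the two symmetric\--function identities you quote give the coefficients $\frac16$ and $\frac{3n^2-5n}{12}$; substituting $D=\sum_p p(p-1)N_p$ and $\int_M{\rm c}_n^M=\sum_pN_p$ then yields the stated formula. The only caution is presentational: the $t\to+\infty$ limit you take is essentially the proof of rigidity rather than a consequence of it, so you should either cite rigidity and evaluate at a convenient point, or run the bounded\--at\--$0$\--and\--$\infty$ argument directly on the Laurent polynomial given by the fixed point formula.
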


Because of Theorem~\ref{GS}, the main equation for  the purposes of this article is:
\begin{eqnarray} \label{assum}
\int_M {\rm c}_1^M {\rm c}_{n-1}^M=0.
\end{eqnarray}

\subsection{Discussion on assumption (\ref{assum})}

We start with a remark.

\begin{remark} \label{fg}
The following hold:
\begin{itemize}
\item[(i)]
if  ${\rm c}^M_1=0$ or ${\rm c}^M_{n-1=0}$ then (\ref{assum}) holds;
\item[(ii)]
if a symplectic $S^1$\--action is Hamiltonian then ${\rm c}^M_1 \neq 0$ (see for example \cite[Lemma 3.8]{T10}).
\end{itemize}
\end{remark}

Condition \eqref{assum} is the  assumption under which the results of this paper are proven.
Next we show that in dimension 6 \eqref{assum} is equivalent to requiring the $S^1$\--action to be 
non\--Hamiltonian. First we recall a few definitions and known facts.

By \cite{Au1991} (see also \cite[Proof of Theorem 1, paragraph 1]{Feldman2001}):
\begin{equation}\label{N0}
 \mbox{if the action is non-Hamiltonian then $N_0=N_n=0$.}
\end{equation}
Fact (\ref{N0}) was used by Feldman in \cite{Feldman2001} to characterize the Todd genus of a compact and connected symplectic manifold endowed with a symplectic $S^1$\--action
and discrete fixed point set.
We recall that, given a compact almost complex manifold $(M,J)$, the \emph{Todd genus} ${\rm Todd}(M)$ is the genus associated to the power series $$\displaystyle\frac{x}{1-e^{-x}}.$$ The following result is due to Feldman.

\begin{theorem}[\cite{Feldman2001}]\label{fel}
The Todd genus associated to a compact and connected symplectic manifold with a symplectic $S^1$\--action and discrete fixed point set is either one, in which case
the action is Hamiltonian, or zero, in which case the action is not Hamiltonian.
\end{theorem}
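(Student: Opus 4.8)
The plan is to compute the Todd genus by equivariant localization and to extract its value purely from the combinatorics of the weights, then to read off the dichotomy from the existence or non\--existence of a fixed point with no negative weights.

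First I would fix a compatible almost complex structure $J$ and recall that ${\rm Todd}(M)=\int_M {\rm Td}({\rm T}M)$ is the $S^1$\--equivariant index of the associated Dolbeault (equivalently ${\rm Spin}^c$ Dirac) operator. Applying the Atiyah\--Bott\--Berline\--Vergne localization formula (equivalently the equivariant Riemann\--Roch / holomorphic Lefschetz fixed point theorem) to the equivariant Todd class, and using that the fixed points are isolated, I would obtain that, for the equivariant parameter $x$, the rational expression
\[
{\rm Todd}(M)=\sum_{p\in M^{S^1}}\ \prod_{j=1}^n\frac{1}{1-e^{-w_j(p)\,x}}
\]
is in fact independent of $x$, where $w_1(p),\ldots,w_n(p)$ are the (nonzero) weights at $p$. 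Here each fixed\--point contribution is the ratio of the localized equivariant Todd class $\prod_j \frac{w_j(p)x}{1-e^{-w_j(p)x}}$ to the equivariant Euler class $\prod_j w_j(p)x$ of the tangent space at $p$.

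The key step is to let $x\to+\infty$. In each factor $\frac{1}{1-e^{-w_j(p)x}}$ the exponential $e^{-w_j(p)x}$ tends to $0$ when $w_j(p)>0$ and to $+\infty$ when $w_j(p)<0$; hence the product over $j$ tends to $1$ if all weights at $p$ are positive and to $0$ otherwise. Since the left\--hand side is constant, passing to the limit yields
\[
{\rm Todd}(M)=\big\lvert\{p\in M^{S^1}\mid \lambda_p=0\}\big\rvert=N_0,
\]
and, letting $x\to-\infty$ instead, ${\rm Todd}(M)=N_n$ (consistent with Theorem~\ref{peto}).

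It then remains to determine $N_0$ in the two cases. If the action is not Hamiltonian, fact (\ref{N0}) gives $N_0=N_n=0$, so ${\rm Todd}(M)=0$. If the action is Hamiltonian, the momentum map $\mu$ is a perfect Morse function whose critical set is $M^{S^1}$ and whose index at $p$ equals $2\lambda_p$; perfectness together with the connectedness of $M$ forces exactly one critical point of index $0$, i.e.\ $N_0=b_0(M)=1$, so ${\rm Todd}(M)=1$. Since every symplectic $S^1$\--action is either Hamiltonian or not, these two computations are exhaustive and give the claimed dichotomy. The main obstacle is the rigorous justification of the localization identity and of the limit $x\to\pm\infty$ in the almost complex (non\--integrable) setting, where ${\rm Todd}(M)$ must be read as the Chern number defined by the Todd polynomial; this is handled by the equivariant index theorem for the ${\rm Spin}^c$ Dirac operator, the genuinely new input being only the sign bookkeeping of the weights.
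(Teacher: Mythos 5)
The paper gives no proof of this statement: Theorem~\ref{fel} is imported from Feldman \cite{Feldman2001} and used as a black box (e.g.\ in Proposition~\ref{cv} and Theorem~\ref{g1}), so there is no internal argument to compare yours against. Your sketch is essentially Feldman's original proof: localize the equivariant Riemann--Roch number to the isolated fixed points, show the answer equals $N_0$ (equivalently $N_n$), and then evaluate $N_0$ in the two cases using \eqref{N0} for non-Hamiltonian actions and perfect Morse theory of $\mu$ (index $2\lambda_p$ at $p$, so $N_0=b_0(M)=1$) for Hamiltonian ones. That part is sound. The one step you state in the wrong logical order is the claim that the fixed-point sum ``is in fact independent of $x$'': this rigidity is not a consequence of ABBV localization by itself, and you then use it as an input before taking the limit. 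The clean argument runs the other way: the fixed-point sum is the character of the virtual $S^1$-representation given by the index of the ${\rm Spin}^c$ Dirac operator, hence a finite Laurent polynomial in $t=e^{x}$; its limits as $t\to\infty$ and $t\to 0$ are $N_0$ and $N_n$ respectively, and since a nonconstant Laurent polynomial cannot have finite limits at both ends, it must be the constant $N_0=N_n$, whose value at $t=1$ is the non-equivariant Todd genus. With that reordering (rigidity deduced from the two-sided limit rather than assumed), your proof is complete and is the standard one; nothing in it conflicts with, or improves upon, what the paper does, since the paper simply cites the result.
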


We are ready to prove the following.

\begin{prop} \label{cv}
Suppose that $S^1$ acts symplectically 
on a compact and connected $6$\--dimensional manifold
$M$ with nonempty discrete fixed point set. Then the $S^1$\--action is non\--Hamiltonian
if and only if \eqref{assum} holds.
\end{prop}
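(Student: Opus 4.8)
The plan is to reduce condition \eqref{assum} to the purely combinatorial statement $N_0 = 0$, and then to recognize the latter as a known characterization of non\--Hamiltonicity. Since $\dim M = 6$ we have $n = 3$, and a symplectic $S^1$\--action admits an invariant compatible almost complex structure $J$ (obtained by averaging over $S^1$), so $(M,J)$ is an almost complex compact connected $S^1$\--manifold with discrete fixed point set, and Theorem~\ref{GS} applies with ${\rm c}_{n-1}^M = {\rm c}_2^M$.

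First I would specialize the right\--hand side of Theorem~\ref{GS} to $n = 3$. The coefficient $6p(p-1) + \frac{5n - 3n^2}{2}$ becomes $6p(p-1) - 6$, which takes the values $-6, -6, 6, 30$ for $p = 0, 1, 2, 3$ respectively. Thus
$$
\int_M {\rm c}_1^M {\rm c}_2^M = -6N_0 - 6N_1 + 6N_2 + 30 N_3.
$$
Next I would invoke Theorem~\ref{peto}, which gives $N_0 = N_3$ and $N_1 = N_2$. Substituting, the terms in $N_1$ cancel and the integral collapses to $24\,N_0$. Hence \eqref{assum} holds if and only if $N_0 = 0$.

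It then remains to identify $N_0 = 0$ with non\--Hamiltonicity. For the forward direction, if the action is non\--Hamiltonian then $N_0 = 0$ directly by \eqref{N0}, so \eqref{assum} holds. For the converse I would argue contrapositively: if the action is Hamiltonian, its momentum map $\mu$ is a Morse function whose critical set is exactly $M^{S^1}$, and since $M$ is compact $\mu$ attains a minimum at some fixed point $p$; at such a point all weights are positive, so $\lambda_p = 0$ and $N_0 \geq 1$. Therefore $N_0 = 0$ forces the action to be non\--Hamiltonian, completing the equivalence. (Alternatively, one could route the converse through Feldman's Theorem~\ref{fel} via the Todd genus.) I do not expect a serious obstacle here: the only points requiring care are verifying that Theorem~\ref{GS} is genuinely applicable (invariance of $J$ and discreteness of the fixed set) and the elementary fact that a Hamiltonian action possesses a fixed point of index zero; the remainder is the arithmetic simplification displayed above.
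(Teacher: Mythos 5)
Your proof is correct, but it takes a different route from the paper's. The paper's argument is two lines: in dimension $6$ the Todd genus is the Chern number $\int_M {\rm c}_1^M{\rm c}_2^M/24$, and Feldman's Theorem~\ref{fel} says this genus is $0$ exactly when the action is non\--Hamiltonian, so \eqref{assum} is equivalent to non\--Hamiltonicity. You instead stay entirely inside the paper's own toolkit: you specialize Theorem~\ref{GS} to $n=3$, use the symmetry $N_i=N_{n-i}$ of Theorem~\ref{peto} to collapse the localization formula to $\int_M {\rm c}_1^M{\rm c}_2^M = 24N_0$ (your arithmetic checks out; the $N_1$ terms cancel, which also makes transparent why the statement is special to dimension $6$), and then identify $N_0=0$ with non\--Hamiltonicity via \eqref{N0} in one direction and the elementary Morse\--theoretic fact that a momentum map attains a minimum in the other. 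What the paper's route buys is brevity, at the cost of quoting the classical closed form of the Todd polynomial for $3$\--folds; what yours buys is self\--containedness and the explicit identification of $24N_0$ as the value of the Chern number, which avoids Feldman entirely in the converse direction. The only point to polish is your parenthetical that $J$ is "obtained by averaging over $S^1$": one averages a Riemannian metric (or works with an invariant $\omega$\--compatible $J$ via the standard polar\--decomposition construction), since the pointwise average of almost complex structures need not be an almost complex structure; this is a standard fix and not a gap.
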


\begin{proof}
It is sufficient to observe that when $\dim(M)=6$, $${\rm Todd}(M)=\displaystyle\int_{M}\frac{{\rm c}_1^Mc^M_2}{24}.$$ 
The conclusion follows from Theorem \ref{fel}.
\end{proof}

\begin{question}
Under which conditions does the claim in Proposition~\ref{cv} hold if $2n\geq 8$?
\end{question}

\subsection{Integer programming problem}
In this section we present an approach for finding the minimal number of fixed points on a compact and connected symplectic manifold
satisfying \eqref{assum}
endowed with a symplectic, but non-Hamiltonian, $S^1$\--action with discrete fixed point set.
In virtue of Theorem \ref{dusa} we will henceforth assume that $$2n=\dim(M)\geq 6.$$

Let $m \in \Z_{\geq 0}$ and let 
\begin{eqnarray}
F_1(N_1,\ldots,N_m)&:=& N_m+ 2\, \sum_{k=1}^{m-1} N_{m-k}; \label{f1} \\
F_2(N_1,\ldots,N_m)&:=&  2\, \sum_{k=1}^{m} N_{k}; \label{f2} \\
G_1(N_1,\ldots,N_m)&:=& -mN_m+ 2\, \sum_{k=1}^{m-1} (6k^2-m) \, N_{m-k} ;\nonumber\\
G_2(N_1,\ldots,N_m)& := & \sum_{k=0}^{m-1}\Big(6k(k+1)-m+1\Big)N_{m-k}.\nonumber
\end{eqnarray} 
For $i \in \{1,\,2\}$ let
\begin{equation} \label{z}
\mathcal{Z}_i:=\Big\{N:=(N_1,\ldots,N_m) \in (\Z_{\geq 0})^m \,\,\, | \,\,\, F_1(N)>0,\,\,\,G_1(N)=0  \Big\}. 
\end{equation}
We are ready to state our main result.

\begin{theoremL} \label{ab}
Let $(M,\omega)$ be a $2n$-dimensional compact and connected symplectic manifold
with a symplectic but non-Hamiltonian $S^1$\--action with 
nonempty, discrete fixed point set and 
such that {\rm (\ref{assum})} holds. Let $F_1,\,F_2$ be 
respectively given in \eqref{f1},\, \eqref{f2}, and let $\mathcal{Z}_1,\mathcal{Z}_2$
be given in \eqref{z}.
Then the number of fixed points of the $S^1$\--action is greater than or equal to:
$$
\mathcal{B}(n):= 
\left\{ \begin{array}{rl}
   \min_{\mathcal{Z}_1}
   F_1 & \,\,\,\,\,\,\,\,\,\,\,\,\,\,\, {\rm if}\,\, \, n =2m;   \\
    \textup{\,} \\
    \min_{\mathcal{Z}_2}
 F_2  & \,\,\,\,\,\,\,\,\,\,\,\,\,\,\, {\rm if}\,\, \, n=2m+1.
  \end{array}   \right.
$$
\end{theoremL}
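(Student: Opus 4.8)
The plan is to translate the hypotheses into the constraints defining $\mathcal{Z}_1,\mathcal{Z}_2$ and to show that the total number of fixed points, expressed in the $N_i$, equals $F_1$ (resp.\ $F_2$) on the relevant domain. First I would set up the bookkeeping. Write $\#M^{S^1}=\sum_{i=0}^n N_i$. By Theorem~\ref{peto}, $N_i=N_{n-i}$ for all $i$, so in the \emph{even} case $n=2m$ the sum is $N_m+2\sum_{k=1}^{m}N_{m-k}=N_m+2\sum_{i=0}^{m-1}N_i$, and in the \emph{odd} case $n=2m+1$ it is $2\sum_{i=0}^{m}N_i$ (there is no middle term since $n$ is odd). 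The non-Hamiltonian hypothesis enters through fact~\eqref{N0}, which gives $N_0=N_n=0$; by the symmetry $N_i=N_{n-i}$ this also removes the $k=m$ outer terms, so the free variables are exactly $N_1,\ldots,N_m$. Substituting $N_0=0$ into the two expressions above yields precisely $F_1(N_1,\ldots,N_m)$ and $F_2(N_1,\ldots,N_m)$ as the count of fixed points; this identifies the objective functions.

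Next I would encode the remaining hypotheses as the feasibility constraints. The condition $F_1(N)>0$ is just the statement that the fixed point set is nonempty (equivalently that $\#M^{S^1}>0$): since $\#M^{S^1}=F_1\geq F_2/2>0$ is automatic once some $N_i\neq 0$, this inequality records that we are not in the empty case. The substantive constraint $G_1(N)=0$ should come from combining Theorem~\ref{GS} with assumption~\eqref{assum}. Setting $\int_M {\rm c}_1^M{\rm c}_{n-1}^M=0$ in the Gonzalez--Sabatini formula gives
\begin{equation*}
0=\sum_{p=0}^n N_p\Big(6p(p-1)+\tfrac{5n-3n^2}{2}\Big).
\end{equation*}
I would then fold this sum in half using $N_p=N_{n-p}$: pairing the term at $p$ with the term at $n-p$ and reindexing by $k=m-p$ (so $p=m-k$) should collapse the coefficient $6p(p-1)+\tfrac{5n-3n^2}{2}$ into the expressions appearing in $G_1$ and $G_2$. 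The appearance of two different forms $G_1,G_2$ in the definition, together with the fact that $\mathcal{Z}_1,\mathcal{Z}_2$ as written in~\eqref{z} both use $G_1$, signals that the even and odd cases require slightly different reductions of the same global identity — in the even case the middle term $p=m$ contributes the isolated $-mN_m$ seen in $G_1$, whereas in the odd case there is no middle term and the pairing produces the shifted coefficient $6k(k+1)-m+1$ of $G_2$.

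With the objective and the constraints in place, the theorem is then immediate: the actual tuple $(N_1,\ldots,N_m)$ arising from any such manifold is a feasible point of $\mathcal{Z}_1$ (even case) or $\mathcal{Z}_2$ (odd case), and the number of fixed points equals the value of $F_1$ (resp.\ $F_2$) at that point, which is at least the minimum $\mathcal{B}(n)$ over the feasible set by definition of the minimum. The main obstacle I anticipate is purely algebraic and lies in the reduction of the single quadratic identity from Theorem~\ref{GS}: I must verify that after applying the symmetry $N_p=N_{n-p}$, discarding $N_0=N_n=0$, and reindexing, the surviving relation is \emph{exactly} $G_1(N)=0$ in the even case and matches the constraint set $\mathcal{Z}_2$ in the odd case (checking in particular that the odd-case constraint is consistent with the $G_1$ written in~\eqref{z}, or equivalently that $G_1$ and $G_2$ define the same hyperplane after the substitutions). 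This amounts to confirming the constant $\tfrac{5n-3n^2}{2}$ combines correctly with the $6p(p-1)$ terms under the pairing $p\leftrightarrow n-p$; I would carry out this coefficient matching carefully for both parities of $n$, as it is the only place where the precise numerology of the $G_i$ is pinned down.
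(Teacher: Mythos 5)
Your proposal is correct and follows essentially the same route as the paper's own proof: express the fixed-point count via $N_i=N_{n-i}$ (Theorem~\ref{peto}) and $N_0=N_n=0$ (non-Hamiltonian), fold the localization identity of Theorem~\ref{GS} in half to obtain $G_1=0$ or $G_2=0$, and conclude by feasibility of the actual tuple. The coefficient matching you defer does check out exactly as you predict (with $-m$ as the middle coefficient in the even case and $2(6k(k+1)-m+1)$ in the odd case), and your observation that \eqref{z} literally writes $F_1,G_1$ for both $i=1,2$ correctly identifies a typo: the intended constraints are $F_i>0$, $G_i=0$.
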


\begin{proof}
By Theorem~\ref{peto} we have that $N_i=N_{n-i}$ for every $i \in \Z$.  Since 
the $S^1$\--action
is symplectic but not Hamiltonian, by \eqref{N0} we have $N_0=N_n=0$. Thus, since the
 total number of fixed points is $$\sum_{k=1}^{n-1}N_k,$$ 
it follows that 
$F_1(N_1,\ldots,N_m)$ (resp.~$F_2(N_1,\ldots,N_m)$) counts the total number of fixed points 
when $n=2m$ (resp. $n=2m+1$), and, since the fixed point set is nonempty, 
we have $F_1>0$ (resp. $F_2>0$). 

Moreover the constraint $G_1=0$
(resp. $G_2=0$) comes from combining \eqref{assum} with Theorem \ref{GS}.
Let $g(p,n)$ be $$6p(p-1)+\displaystyle\frac{5n-3n^2}{2}.$$ 
\\ If $n=2m$, by Theorem~\ref{peto} and \eqref{N0}, a computation shows that
\begin{equation}\label{G1=0}
\begin{aligned}
\sum_{p=0}^nN_p\, g(p,n) & =   -mN_m+\sum_{k=1}^{m-1}  \Big(g(m-k,n)+g(m+k,n)\Big) \, N_{m-k} \\
 & =-mN_m+2\sum_{k=1}^{m-1} (6k^2-m) \, N_{m-k} \\
 &=G_1(N_1,\ldots,N_m). \nonumber
\end{aligned}
\end{equation}
Analogously, if $n=2m+1$ we have that
\begin{equation}\label{G2=0}
\begin{aligned}
\sum_{p=0}^nN_p\, g(p,n) & =  \sum_{k=0}^{m-1}N_{m-k} \Big( g(m-k,n)+g(m+k+1,n) \Big) \\
& =2\sum_{k=0}^{m-1}\Big(6k(k+1)-m+1\Big) \,N_{m-k} \\
&=2\, G_2(N_1,\ldots,N_m). \nonumber
\end{aligned}
\end{equation}
If $n=2m$ (resp. $n=2m+1$), by Theorem \ref{GS} the constraint $G_1=0$ (resp. $G_2=0$) is equivalent to \eqref{assum}.
\end{proof}

The following integer programming problem, which is motivated by the conjecture of
Kosniowski (Conjecture~\ref{K1979}), arises from Theorem~\ref{ab}.

\begin{problem} \label{op}
\emph{Let $i \in \{1,\,2\}$.  Let $F_1,\,F_2$ be 
respectively given in \eqref{f1},\, \eqref{f2}, and let $\mathcal{Z}_1,\mathcal{Z}_2$
be given in \eqref{z}.  Find conditions on $n \in \Z$, $n\geq 3$, such that
\begin{eqnarray} \label{xc}
\min_{\mathcal{Z}_i}   F_i \,\,\, \geq \,\,\, [n/2]+1
\end{eqnarray}
holds. }
\end{problem}

The table in Figure~\ref{abd} provides a solution to Problem~\ref{op} when
$2n \in \{8,10,12,14,18,20, 22\}$. We have not solved it for $2n \geq 26$.

Similarly, motivated by Frankel's Theorem (Theorem~\ref{general2}), we propose the following
sharper version of Problem~\ref{op}.

\begin{problem} \label{op2}
\emph{Let $i \in \{1,\,2\}$.  Let $F_1,\,F_2$ be 
respectively given in \eqref{f1},\, \eqref{f2}, and let $\mathcal{Z}_1,\mathcal{Z}_2$
be given in \eqref{z}.  Find conditions on $n \in \Z$, $n \geq 3$, such that
\begin{eqnarray} \label{xc2}
\min_{\mathcal{Z}_i}   F_i \,\,\, \geq \,\,\, n+1
\end{eqnarray}
holds. }
\end{problem}

\begin{example}\label{ex1}
Using Theorem~\ref{ab} we can compute $\mathcal{B}(n)$ for some values of $n$.
The table in Figure \ref{abd} gives $\mathcal{B}(n)$ for $n\leq 12$. For the sake of clarity we compute $\mathcal{B}(n)$ when $n=4$ and $5$, the other cases are analogous.
\begin{itemize}
\item $2n=8$. We have to minimize $F_1(N_1,N_2)=N_2+2N_1$ subject to the conditions $G_1(N_1,N_2)=-2N_2+8N_1=0$, $F_1>0$ and$N_1,N_2\in \Z_{\geq 0}$.
One immediately sees that the values of $N_1$ and $N_2$ which minimize $F_1$ are respectively $1$ and $4$, yielding $\mathcal{B}(4)=6$.
\item $2n=10$. In this case we have to minimize $F_2(N_1,N_2)=2(N_1+N_2)$ subject to the conditions  $G_2(N_1,N_2)=-N_2+11N_1=0$, $F_2>0$ and $N_1,N_2\in \Z_{\geq 0}$.
A computation shows that the values of $N_1$ and $N_2$ which minimize $F_2$ 
are respectively $N_1=1$ and $N_2=11$, yielding $\mathcal{B}(5)=24$.
\end{itemize}
\end{example}

\begin{figure}[h] \label{abd}
  \centering
  \begin{tabular}{cccl}
     $\dim M=2n$& \,\,\,\,\,$n+1$ & \,\,\,\,\, $[n/2] +1$  & minimal $|M^{S^1}|$ if $\int_M c^M_1 c^M_{n-1} =0$\\
    \hline
  {8} &5& 3&  6\\
 {10} &6& 3 & 24 \\
 12 &7& 4& 4 \\
   {14} &8&4& 12 \\
 {\bf 16} &{\bf 9}&{\bf 5}& {\bf 3} \\
   18 &10 &5& 8 \\
      20 &11 &6& 12 \\
      22 &12 &6& 6 \\
      {\bf 24} & {\bf 13} & {\bf 7} & {\bf 2} \\
 \vdots &\vdots && \vdots\\
    $2n$ &$n+1$ && $\mathcal{B}(n)$\\
     \vdots &&& \vdots
  \end{tabular}
 \caption{Minimal number of fixed points of a symplectic $S^1$\--action 
 with nonempty discrete fixed point
  set on a compact and connected $2n$\--dimensional manifold, 
  under assumption (\ref{assum}). 
  The $[n/2]+1$ bound appears in Conjecture~\ref{K1979}. 
  The $n+1$ bound is motivated by Theorem~\ref{general2}. Values in boldface are those
  for which $\mathcal{B}(n)$ is smaller than the value predicted in  Conjecture~\ref{K1979}.}
\end{figure}

\begin{remark} \label{xxy}
The proof of Theorem~\ref{GS} in \cite{GoSa12} makes use of
equivariant $K$\--theory. Theorem~\ref{GS} is used to
prove Theorem~\ref{ab}. Theorem~\ref{ab} leads to estimates for $\mathcal{B}(n)$ which, in some cases, 
 improve previous estimates (see Figure~\ref{abd}). On the other hand, \cite{PeTo2010} uses equivariant 
 cohomology  to estimate $\mathcal{B}(n)$
(under different assumptions), eg. see  Theorem~\ref{general}.
\end{remark}

We have not solved the optimization problem arising from Theorem~\ref{ab}  to
estimate $\mathcal{B}(n)$, $n \in \N$ (see also Problems~\ref{op}, \ref{op2}). 
However, we suspect that it could be amenable to techniques
from linear programming. If this is the case, Theorem~\ref{ab} 
could lead to the solution of the Kosniowski's conjecture (Conjecture~\ref{K1979}) 
for more values of $n \in \N$, provided \eqref{assum} holds.

\section{Applications of Theorem~\ref{ab}}

The 
following result and its corollary are consequences of Theorem~\ref{ab}. They provide some 
necessary conditions on the number of negative weights at the fixed points of the action.

\begin{theoremL}\label{est1}
Let $(M,\omega)$ be a $2n$-dimensional compact and connected symplectic manifold
with a symplectic but non-Hamiltonian $S^1$\--action. Suppose that the
fixed point set of the action is nonempty and discrete, and that
{\rm (\ref{assum})} holds. Then the following hold:
\begin{itemize}
\item If $n=2m$ and $m \notin \{6k^2\mid k\in \Z \}$, then:
\begin{eqnarray} \label{eq1}
N_m+ \sum_{k=1}^{\ell} N_{m-k}>0,\quad\mbox{and}\quad \sum_{k=\ell+1} ^{m-1}N_{m-k}>0
\end{eqnarray}
where $$\ell=[\sqrt{m/6}]\,.$$\\
\item If $n=2m+1$ and $m \notin \{6k(k+1)+1\mid k\in \Z_{\geq 0}\}$, then:
\begin{eqnarray} \label{eq3}
\sum_{k=0}^{\ell} N_{m-k}>0,\quad\mbox{and}\quad \sum_{k=\ell} ^{m-1}N_{m-k}>0
\end{eqnarray}
where $$\ell=\left[\displaystyle\frac{-3+\sqrt{6m+3}}{6}\right].$$
\end{itemize}
\end{theoremL}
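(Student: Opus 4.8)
The plan is to reduce both cases to the single linear constraint already produced in the proof of Theorem~\ref{ab}. Under the stated hypotheses, Theorem~\ref{peto} gives $N_i=N_{n-i}$ and non-Hamiltonicity gives $N_0=N_n=0$ via \eqref{N0}; combined with \eqref{assum} and Theorem~\ref{GS}, the distribution $(N_1,\ldots,N_m)$ satisfies $G_1(N)=0$ when $n=2m$ and $G_2(N)=0$ when $n=2m+1$. I would then read off both displayed inequalities purely from the signs of the coefficients of these linear forms, using only that each $N_j\geq 0$ and that the total count $F_1(N)>0$ (resp.\ $F_2(N)>0$), since the fixed point set is nonempty.

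First I would carry out the sign analysis. In $G_1$ the coefficient of $N_{m-k}$ is $2(6k^2-m)$ for $1\leq k\leq m-1$, while $N_m$ carries the \emph{undoubled} coefficient $-m$. Since $6k^2-m$ is increasing in $k$ and vanishes exactly at $k=\sqrt{m/6}$, and since $\ell=[\sqrt{m/6}]$ satisfies $\ell\leq\sqrt{m/6}<\ell+1$, the indices split cleanly: for $0\leq k\leq\ell$ the coefficient is $\leq 0$, and for $\ell+1\leq k\leq m-1$ it is $>0$. The hypothesis $m\notin\{6k^2\}$ is exactly what rules out a vanishing coefficient at $k=\ell$, so the first block is in fact strictly negative (including $-m<0$ on $N_m$). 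The case $n=2m+1$ is identical: the coefficient $6k(k+1)-m+1$ of $N_{m-k}$ in $G_2$ is increasing in $k$, vanishes at the positive root $k=(-3+\sqrt{6m+3})/6$, and the exclusion $m\notin\{6k(k+1)+1\}$ is precisely the condition that this root is not an integer, so $\ell=[(-3+\sqrt{6m+3})/6]$ again separates a strictly negative block $0\leq k\leq\ell$ from a strictly positive block $\ell+1\leq k\leq m-1$.

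With the sign split in hand, both inequalities follow by the same contradiction. Writing $G_1=0$ as (negative block)${}+{}$(positive block)${}=0$, where by nonnegativity of the $N_j$ the negative block is $\leq 0$ and the positive block is $\geq 0$: if the negative block $N_m+\sum_{k=1}^{\ell}N_{m-k}$ vanished, then every $N_j$ in it would be zero, and the equality would force the positive block $\sum_{k=\ell+1}^{m-1}(6k^2-m)N_{m-k}$ --- a sum of strictly positive coefficients against nonnegative $N_j$ --- to vanish term by term as well; hence all $N_j=0$, contradicting $F_1>0$. The symmetric argument, assuming the positive block vanishes, yields the second inequality. The same two-line argument applied to $G_2=0$ gives the inequalities for $n=2m+1$ (the second of which is even weaker than the strict positive-block statement, since it includes the extra nonnegative term $N_{m-\ell}$).

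The main obstacle --- really the only place requiring care --- is the bookkeeping that pins $\ell$ to the sign change and matches the two excluded arithmetic progressions to the vanishing of a coefficient: one must check that $6\ell^2<m<6(\ell+1)^2$ (resp.\ the analogue for $G_2$) holds strictly under the stated exclusions, and must remember that the $N_m$ term of $G_1$ is counted once rather than twice, so its coefficient is $-m$ and it sits inside the negative block. Everything else is immediate from nonnegativity and the cancellation forced by $G_i=0$.
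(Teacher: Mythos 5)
Your proposal is correct and follows essentially the same route as the paper: both rewrite the constraint $G_i=0$ as a balance between a strictly negative-coefficient block and a strictly positive-coefficient block (with $\ell$ located at the sign change, strictness guaranteed by the arithmetic exclusions on $m$), and then conclude from nonnegativity of the $N_j$ and $\sum_j N_j>0$ that each block must contain a nonzero term. Your extra care about the undoubled $-m$ coefficient on $N_m$ and about the redundant term $N_{m-\ell}$ in the second sum of \eqref{eq3} only makes explicit what the paper leaves implicit.
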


\begin{proof}
We may write the condition $G_1=0$ as
\begin{eqnarray} \label{g=0}
2  \sum_{k=[\sqrt{m/6}]+1}^{m-1} (6k^2-m)N_{m-k}=
mN_m+ 2 \sum_{k=1}^{[\sqrt{m/6}]}  (m-6k^2) N_{m-k}.\nonumber
\end{eqnarray}
Then the coefficients of the $N_i$'s in the sums above
are strictly positive in the range over which they are added. Since
$$
\sum_{i=1}^{n-1} N_i >0,
$$
formula (\ref{eq1}) follows.

The proof of (\ref{eq3}) is similar to the proof of (\ref{eq1}) in view of 
the fact that we may write the condition $G_2=0$  as
\begin{eqnarray}\label{g2=0}
\sum_{k=0}^{[\frac{-3+\sqrt{6m+3}}{6}]} \Big(m-1-6k(k+1)\Big)N_{m-k}  = \nonumber \\
=\sum_{k=[\frac{-3+\sqrt{6m+3}}{6}]+1} ^{m-1}\Big(6k(k+1)-m+1\Big)N_{m-k} ,\nonumber
\end{eqnarray}
and each of the coefficients multiplying the $N_i$'s is positive.
\end{proof}
From Theorem \ref{est1} we obtain the  following (which  
complements Theorem~\ref{maintheorem}).

\begin{cor}\label{bound1}
Let $(M,\omega)$ be a $2n$-dimensional compact and connected symplectic manifold
with a symplectic but non-Hamiltonian $S^1$\--action. Suppose that the
fixed point set of the action is nonempty and discrete, and that
{\rm (\ref{assum})} holds. Then the following hold:
\begin{itemize}
\item[(1)] 
Let $n=2m$ and $m \notin \{6k^2\mid k\in \Z\}$. Then:
\begin{itemize}
\item[(a)]
if $N_m>0$ or there exists a $[\sqrt{m/6}]+1 \leq k \leq m-1$ such that $N_{m-k} \neq 0$ then the minimal number of fixed points is $3$;
\item[(b)]
if there exists a $1 \leq k \leq [\sqrt{m/6}]$ such that $N_{m-k} \neq 0$
then the minimal number of fixed points
is $4$;
\item[(c)]
 if $m<6$ then 
$N_m>0$.
\end{itemize}

\item[(2)] Let $n=2m+1$ and $m \notin \{6k(k+1)+1\mid k\in \Z_{\geq 0}\}$. Then the minimal number of fixed points is $4$.
\end{itemize}
\end{cor}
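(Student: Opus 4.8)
The plan is to derive the corollary from Theorem~\ref{est1} by a direct counting argument, using two auxiliary facts: the symmetry $N_i = N_{n-i}$ of Theorem~\ref{peto}, and the nonemptiness of the fixed point set, which gives $\sum_{i=1}^{n-1} N_i > 0$. The symmetry is what assigns multiplicities to each fixed-point class: when $n = 2m$ every class $N_{m-k}$ with $k \geq 1$ is matched with $N_{m+k}$ and hence contributes $2N_{m-k}$ to the total $F_1$, while the central class $N_m$ contributes only $N_m$; when $n = 2m+1$ there is no self-paired class, so every $N_{m-k}$, $0 \leq k \leq m-1$, contributes $2N_{m-k}$ to $F_2$.

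First I would treat the even case $n = 2m$. Theorem~\ref{est1} supplies the two strict inequalities in \eqref{eq1}: the outer sum $\sum_{k=\ell+1}^{m-1} N_{m-k} > 0$ with $\ell = [\sqrt{m/6}]$, and the inner sum $N_m + \sum_{k=1}^{\ell} N_{m-k} > 0$. The outer inequality forces some $N_{m-k_1} > 0$ with $k_1 \geq \ell+1 \geq 1$, contributing $2$ to $F_1$; the inner inequality forces either $N_m > 0$, contributing $1$, or some $N_{m-k_2} > 0$ with $1 \leq k_2 \leq \ell$, contributing $2$. Adding these yields (a), the bound $F_1 \geq 3$; if moreover an inner off-center class $N_{m-k_2}$ with $1 \leq k_2 \leq \ell$ is nonzero, then both witnesses contribute $2$ and we obtain (b), $F_1 \geq 4$. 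For (c), I would observe that $m < 6$ is equivalent to $\ell = 0$, which collapses the inner sum to the single condition $N_m > 0$.

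Next I would handle the odd case $n = 2m+1$, which is cleaner since every class carries multiplicity $2$. Here I would use the rewriting of the constraint $G_2 = 0$ obtained in the proof of Theorem~\ref{est1}, namely $\sum_{k=0}^{\ell}(m-1-6k(k+1))N_{m-k} = \sum_{k=\ell+1}^{m-1}(6k(k+1)-m+1)N_{m-k}$ with $\ell = [(-3+\sqrt{6m+3})/6]$. The hypothesis $m \notin \{6k(k+1)+1\}$ guarantees that every coefficient on both sides is strictly positive, so the two index ranges $[0,\ell]$ and $[\ell+1,m-1]$ are genuinely disjoint. Since the fixed point set is nonempty, neither side can vanish, so there is a nonzero $N_{m-k_1}$ with $k_1 \in [0,\ell]$ and a nonzero $N_{m-k_2}$ with $k_2 \in [\ell+1,m-1]$. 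These are distinct classes, each contributing $2$ to $F_2$, whence $F_2 \geq 4$, which proves (2). Tightness of each bound is witnessed by the explicit minimizing configurations recorded in Figure~\ref{abd}.

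The counting itself is routine; the one point that needs care is the disjointness of the two index ranges, which is exactly where the excluded arithmetic sets $\{6k^2\}$ and $\{6k(k+1)+1\}$ enter. They rule out a vanishing coefficient at the boundary index $k = \ell$, ensuring that the inner and outer witnesses are different fixed-point classes rather than a single class counted once. Keeping the multiplicities straight—in particular that the central class $N_m$ in the even case is counted only once, which is precisely what separates the bound $3$ in (a) from the bound $4$ in (b)—is the only subtlety I expect to have to argue carefully.
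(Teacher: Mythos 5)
Your argument is correct and is essentially the paper's own proof: both inequalities of Theorem~\ref{est1} (equivalently, the rewritten constraints $G_1=0$ and $G_2=0$ with strictly positive coefficients on disjoint index ranges) force nonzero classes in two disjoint ranges, and the symmetry $N_i=N_{n-i}$ of Theorem~\ref{peto} converts these witnesses into the counts $3$ and $4$ exactly as you do --- you are in fact more explicit than the paper about part (2), which it dismisses as ``analogous,'' and about why the ranges must be taken disjoint there. The one inaccuracy is the closing appeal to Figure~\ref{abd} for tightness: that table records $\mathcal{B}(n)$ (e.g.\ $\mathcal{B}(4)=6$), not configurations realizing $3$ or $4$ fixed points, but since the corollary, like the paper's proof, only establishes lower bounds, this does not affect the argument.
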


\begin{proof}
(1) By \eqref{eq1},
if $N_m>0$ then  there exists a $[\sqrt{m/6}]+1 \leq k \leq m-1$ such that
$N_{m-k}\neq 0$; viceversa, if there exists a $[\sqrt{m/6}]+1 \leq k \leq m-1$ such that
$N_{m-k}\neq 0$ then either $N_m$ or $N_{m-h}>0$, where $1\leq h \leq [\sqrt{m/6}]$, and (a) follows
from Theorem~\ref{peto}.

If there exists a $1 \leq k \leq [\sqrt{m/6}]$ such that $N_{m-k} \neq 0$
then there exists $[\sqrt{m/6}]+1 \leq h \leq m-1$ such that
$N_{m-h}\neq 0$ which, by Theorem~\ref{peto}, implies (b).

The proof of (2) is analogous. 
\end{proof}

\begin{remark}
Equation {\rm (\ref{xc})} may not hold, as it may be seen from Table \ref{abd} at $n=8$. 
When $\dim(M)=6$ the equation given by $G_2=0$ is an identity thus, by Theorem \ref{peto}, $\mathcal{B}(3)=2$.
More generally,
it's easy to see that when $\dim(M)=2n=4m$ with $m\in \{6k^2\mid k\in \Z_{>0}\}$, or
when $\dim(M)=2n=4m+2$, with $m\in \{6k(k+1)+1\mid k\in \Z_{\geq 0}\}$ then, by our procedure and Theorem \ref{peto}, we get
$\mathcal{B}(n)=2$.
\end{remark}

The following is a consequence of Example~\ref{ex1}.

\begin{cor} \label{general2}
Let $(M,\omega)$ be a $2n$-dimensional compact and connected symplectic manifold
with a symplectic but non-Hamiltonian $S^1$ action with 
nonempty, discrete fixed point set and 
such that {\rm (\ref{assum})} holds. Suppose that $2n \in \{8,10,14,20\}$. Then the $S^1$\--action has at least $n + 1$ 
fixed points.
\end{cor}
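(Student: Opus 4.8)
The plan is to read off each case $2n\in\{8,10,14,20\}$ from Theorem~\ref{ab} and compare the resulting value of $\mathcal{B}(n)$ against the bound $n+1$. By Theorem~\ref{ab}, under the standing hypotheses (compact, connected, symplectic but non-Hamiltonian $S^1$-action with nonempty discrete fixed point set, and \eqref{assum} holding), the number of fixed points is at least $\mathcal{B}(n)$, so it suffices to verify $\mathcal{B}(n)\geq n+1$ in each of the four listed dimensions. This is exactly the content of Example~\ref{ex1} together with the table in Figure~\ref{abd}, which record the solutions of the relevant integer optimization problems.

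First I would handle the even cases $n=2m$, that is $2n\in\{8,20\}$, corresponding to $m=2$ and $m=5$. Here $\mathcal{B}(n)=\min_{\mathcal{Z}_1}F_1$, where $F_1(N_1,\ldots,N_m)=N_m+2\sum_{k=1}^{m-1}N_{m-k}$ is minimized over $(\Z_{\geq0})^m$ subject to $G_1=0$ and $F_1>0$. For $2n=8$ ($m=2$) the constraint $G_1=-2N_2+8N_1=0$ forces $N_2=4N_1$, so the minimizer is $(N_1,N_2)=(1,4)$, giving $\mathcal{B}(4)=4+2=6\geq 5=n+1$. For $2n=20$ ($m=5$) one solves $G_1=0$ over the nonnegative integers and reads off, as recorded in Figure~\ref{abd}, that $\mathcal{B}(10)=12\geq 11=n+1$; this is a finite search that can be carried out by hand exactly as in Example~\ref{ex1}.

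Next I would treat the odd cases $n=2m+1$, namely $2n\in\{10,14\}$, corresponding to $m=2$ and $m=3$. Here $\mathcal{B}(n)=\min_{\mathcal{Z}_2}F_2$ with $F_2(N_1,\ldots,N_m)=2\sum_{k=1}^{m}N_k$ minimized subject to $G_2=0$ and $F_2>0$. For $2n=10$ ($m=2$) the constraint $G_2=-N_2+11N_1=0$ gives $N_2=11N_1$, so the minimizer is $(1,11)$, yielding $\mathcal{B}(5)=2(1+11)=24\geq 6=n+1$. For $2n=14$ ($m=3$) the analogous finite computation gives $\mathcal{B}(7)=12\geq 8=n+1$, again as tabulated in Figure~\ref{abd}. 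In every one of the four cases the computed $\mathcal{B}(n)$ exceeds $n+1$, so the $S^1$-action has at least $n+1$ fixed points, which is the assertion.

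The only real obstacle here is arithmetic bookkeeping rather than any conceptual difficulty: one must confirm that the stated minimizers genuinely minimize $F_i$ subject to the integrality and the linear equality constraint $G_i=0$, i.e.\ that no smaller feasible value is overlooked. Since each constraint $G_i=0$ is a single linear Diophantine equation in finitely many nonnegative variables with strictly positive and strictly negative coefficients, the feasible set is generated by scaling small integer solutions, and minimizing the positive linear form $F_i$ reduces to examining the primitive solution; this makes each verification a short, completely explicit hand calculation. Thus the corollary follows immediately by combining Theorem~\ref{ab} with the case-by-case evaluations of Example~\ref{ex1} and Figure~\ref{abd}.
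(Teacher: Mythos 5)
Your proposal is correct and follows the paper's own route exactly: the paper derives this corollary directly from Theorem~\ref{ab} together with the case-by-case evaluations of $\mathcal{B}(n)$ in Example~\ref{ex1} and Figure~\ref{abd}, which is precisely what you do. Your explicit verifications (e.g.\ $\mathcal{B}(4)=6$, $\mathcal{B}(5)=24$, $\mathcal{B}(7)=12$, $\mathcal{B}(10)=12$, each at least $n+1$) match the tabulated values, so nothing is missing.
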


Under some assumptions, we can answer the following question.

\begin{question}(\cite{PeTo2010}) \label{new}
\emph{Suppose that $n$ is an odd number.
Is there a symplectic 
$S^1$\--action on a compact, connected symplectic $2n$\--manifold $(M,\omega)$ with
exactly {\em three} fixed points, other than the standard
actions on $\mathbb{C} P^2$?}
\end{question}

Question \ref{new} was settled by Jang recently.

\begin{theorem}[\cite{Ja12}] \label{ja}
Let $S^1$ act symplectically on a compact, connected symplectic manifold $(M,\omega)$. 
If there are exactly three fixed points, $M$ is equivariantly symplectomorphic to $\mathbb{C} P^2$.
\end{theorem}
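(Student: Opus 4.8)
The plan is to pin down the dimension first, showing that exactly three fixed points forces $\dim M = 4$, and only then invoke the classification of Hamiltonian circle actions on symplectic four-manifolds. Throughout I use that $M^{S^1}$ is partitioned by the number of negative weights, so the total count is $\sum_i N_i = 3$, and that Theorem~\ref{peto} gives the symmetry $N_i = N_{n-i}$. The first move is to split via Feldman's dichotomy (Theorem~\ref{fel}): the action is either Hamiltonian or non-Hamiltonian.

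In the Hamiltonian case, Proposition~\ref{easy} forces $n+1 \le 3$, hence $n = 2$ and $\dim M = 4$, so this case is immediate. In the non-Hamiltonian case, \eqref{N0} gives $N_0 = N_n = 0$. When $n$ is odd the involution $i \mapsto n-i$ has no fixed index, so it pairs $1,\dots,n-1$ and forces $\sum_i N_i = 2\sum_{i=1}^{(n-1)/2} N_i$ to be even, contradicting $\sum_i N_i = 3$. Thus the only surviving possibility is a non-Hamiltonian action with $n = 2m$ even and $\dim M = 4m \ge 8$, and ruling this out is the heart of the matter.

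To attack it I would first exploit Theorem~\ref{maintheorem}: since $\dim M \ge 6$ and there are fewer than four fixed points, the Chern class map $\op{c}_1(M)\colon M^{S^1}\to\Z$ must be identically zero. Feeding the vanishing $\op{c}_1(M)(p) = 0$, i.e. $\op{c}_1^{S^1}(\mathrm{T}M)|_p = 0$, into the Atiyah--Bott--Berline--Vergne localization formula for $\int_M \op{c}_1^M \op{c}_{n-1}^M$ kills every summand, so assumption \eqref{assum} holds automatically. Theorem~\ref{ab} then applies and yields at least $\mathcal{B}(2m)$ fixed points; since the minimizations recorded in Figure~\ref{abd} give $\mathcal{B}(4)=6$, $\mathcal{B}(6)=4$, $\mathcal{B}(10)=12$, and so on, every even dimension with $\mathcal{B}(n) > 3$ is excluded outright.

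The main obstacle is precisely the dimensions in which $\mathcal{B}(n) \le 3$ — the boldface rows $2n = 16$ and $2n = 24$ of Figure~\ref{abd} and their analogues for larger even $n$ — where the integer-programming bound is simply too weak to forbid three fixed points. Here I would drop the counting bound and analyze the multiset of weights directly: with only three fixed points the admissible weight data is extremely rigid, and I would use the isotropy submanifolds $M^{\Z_k}$ (each a compact symplectic $S^1$-manifold whose own fixed points are drawn from the same three points) together with the self-consistency of the localization identities in equivariant cohomology and $K$-theory to show that no such weight assignment can close up once $\dim M > 4$. This rigidity analysis is the genuinely hard and new ingredient; it is what forces $\dim M = 4$. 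Once the dimension is four, Theorem~\ref{dusa} (McDuff) makes the action Hamiltonian, whence $\op{c}_1^M \neq 0$, $b_2(M) = 1$, and Karshon's classification of Hamiltonian circle actions on compact symplectic four-manifolds identifies $M$ equivariantly with $\mathbb{C}P^2$.
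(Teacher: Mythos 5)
The paper does not actually prove Theorem~\ref{ja}: it is quoted from Jang's paper \cite{Ja12}, and the present article only recovers special cases of it (via Corollary~\ref{bound1}(2), under hypothesis \eqref{assum} and a congruence condition on $m$). So there is no internal proof to compare against; your proposal has to stand on its own, and as written it does not close.

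Your reductions are fine as far as they go: the Hamiltonian case forces $n\leq 2$ by Proposition~\ref{easy}; the non-Hamiltonian case with $n$ odd dies by the parity argument from $N_i=N_{n-i}$ and $N_0=N_n=0$; for non-Hamiltonian actions with $n=2m\geq 4$, Theorem~\ref{maintheorem} does force ${\rm c}_1(M)\colon M^{S^1}\to\Z$ to vanish identically, and localization then gives $\int_M {\rm c}_1^M{\rm c}_{n-1}^M=0$, so Theorem~\ref{ab} applies and kills every even $n$ with $\mathcal{B}(n)>3$. But the theorem's entire content lies in the residual cases where $\mathcal{B}(n)\leq 3$ (e.g.\ $2n=16$, $2n=24$, and infinitely many further even dimensions where $m\in\{6k^2\}$ or the minimizer of $F_1$ on $\mathcal{Z}_1$ is small), and there you offer only a programme: ``analyze the multiset of weights directly\ldots{} show that no such weight assignment can close up.'' That is not an argument; it is a restatement of the problem. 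Nothing in this paper's toolkit (the single relation $G_1=0$ coming from ${\rm c}_1^M{\rm c}_{n-1}^M$, plus $N_i=N_{n-i}$) suffices, precisely because the table shows these constraints are compatible with three fixed points in those dimensions. Jang's actual proof requires a genuinely different and more delicate analysis of the weights at the three fixed points (using the full strength of localization over all isotropy data), and that missing ingredient is the theorem. The final step via Karshon's classification in dimension four is reasonable but is also not spelled out (one must first extract $b_0=b_2=b_4=1$ from the Morse theory of the momentum map before identifying the decorated graph with that of a standard $\mathbb{C}P^2$ action).
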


Corollary \ref{bound1} (2) gives Theorem \ref{ja} with a simpler proof in the following cases.

\begin{cor}
The answer to Question \ref{new} is ``No" whenever $\dim(M)=2m+1$, \eqref{assum} holds and $m\notin \{6k(k+1)+1 \mid  k\in \Z_{\geq 0}\}$.
\end{cor}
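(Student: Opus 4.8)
The plan is to read this off from Corollary~\ref{bound1}(2) together with the elementary Hamiltonian lower bound, by splitting into the Hamiltonian and non\--Hamiltonian cases. So let $S^1$ act symplectically on a compact connected $2n$\--dimensional manifold $(M,\omega)$ with $n=2m+1$, assume \eqref{assum} holds and $m\notin\{6k(k+1)+1\mid k\in\Z_{\geq 0}\}$, and suppose the action possesses exactly three fixed points. Then $M^{S^1}$ is nonempty and discrete, so the action is either Hamiltonian or non\--Hamiltonian, and I would treat these two alternatives separately, showing in each that the number of fixed points must in fact be at least four.

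First I would dispose of the Hamiltonian case. Under the standing convention $\dim M=2n\geq 6$ we have $m\geq 1$, hence $n=2m+1\geq 3$. If the action were Hamiltonian, Proposition~\ref{easy} would force at least $n+1\geq 4$ fixed points, contradicting the assumption of exactly three; thus the action must be non\--Hamiltonian. Next, with the action now known to be symplectic but non\--Hamiltonian, with nonempty discrete fixed point set, and with \eqref{assum} in force, the hypotheses of Corollary~\ref{bound1}(2) are exactly satisfied because $n=2m+1$ and $m\notin\{6k(k+1)+1\mid k\in\Z_{\geq 0}\}$. That corollary then yields at least four fixed points, again contradicting the assumption of three. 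Hence no symplectic $S^1$\--action on such an $M$ can have exactly three fixed points. Finally, since $n$ is odd the manifold $M$ has dimension $2n\geq 6$ and so is never symplectomorphic to the $4$\--dimensional $\mathbb{C} P^2$; the clause ``other than the standard actions on $\mathbb{C} P^2$'' in Question~\ref{new} is therefore vacuous here, and the answer is ``No''.

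The argument is little more than a case split, so I do not anticipate a genuine obstacle; the only points demanding care are checking that the two invoked results apply verbatim and observing that three isolated fixed points are incompatible with a Hamiltonian action once $n\geq 3$. This last observation is exactly where the hypothesis that $n$ is odd, combined with the convention $\dim M\geq 6$ (so that $m\geq 1$ and $n+1\geq 4$), is used; the restriction $m\notin\{6k(k+1)+1\mid k\in\Z_{\geq 0}\}$ is needed only to license the application of Corollary~\ref{bound1}(2) in the non\--Hamiltonian case.
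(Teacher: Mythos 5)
Your proof is correct and takes essentially the same route as the paper, which offers no argument beyond the remark that the statement ``follows from Corollary~\ref{bound1}(2).'' Your explicit disposal of the Hamiltonian alternative via Proposition~\ref{easy} (using $n=2m+1\geq 3$ under the standing convention $\dim M\geq 6$) is a point of care the paper leaves implicit, since Question~\ref{new} does not itself assume the action is non\--Hamiltonian, but it is exactly the right way to close that gap.
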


\begin{theoremL} \label{g1}
Let $(M,\omega)$ be an $8$-dimensional compact and connected symplectic manifold
with a symplectic but non-Hamiltonian $S^1$\--action with 
nonempty, discrete fixed point set and 
such that ${\rm c}_1^M=0$. Then $${\rm c}_2^M\neq0$$ and $$\int_M({\rm c}_2^M)^2\geq 2.$$
\end{theoremL}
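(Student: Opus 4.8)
The plan is to turn the statement into a Chern-number computation and then read off the answer from the Euler characteristic. Since ${\rm c}_1^M=0$, assumption~\eqref{assum} holds by Remark~\ref{fg}(i), so Theorem~\ref{ab} applies with $n=4$ (hence $m=2$). By Example~\ref{ex1} the minimum of $F_1$ over $\mathcal{Z}_1$ is $\mathcal{B}(4)=6$, so the action has at least $6$ fixed points. Because the fixed point set is discrete, the Euler characteristic equals the number of fixed points, i.e. $\int_M {\rm c}_4^M=\chi(M)\geq 6$; this is the only input about the size of $M^{S^1}$ that I will need.

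Next I would invoke Feldman's theorem. Since the action is non-Hamiltonian, Theorem~\ref{fel} gives that the Todd genus of $M$ vanishes. In complex dimension $4$ the top component of the Todd class is $\frac{1}{720}\big(-{\rm c}_1^4+4{\rm c}_1^2{\rm c}_2+{\rm c}_1{\rm c}_3+3{\rm c}_2^2-{\rm c}_4\big)$, which upon setting ${\rm c}_1^M=0$ collapses to $\frac{1}{720}\big(3({\rm c}_2^M)^2-{\rm c}_4^M\big)$. Integrating over $M$ and using that the Todd genus is zero yields the single clean relation $3\int_M({\rm c}_2^M)^2=\int_M {\rm c}_4^M=\chi(M)$.

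Combining the two steps gives $\int_M({\rm c}_2^M)^2=\chi(M)/3\geq 2$, which is the asserted inequality; in particular $\int_M({\rm c}_2^M)^2>0$ forces ${\rm c}_2^M\neq 0$. The computation is essentially mechanical once the ingredients are lined up, so there is no serious analytic obstacle. The main point — and the only place where genuine input enters — is the simultaneous use of two facts about the same action: the vanishing of the Todd genus (Feldman), which, after imposing ${\rm c}_1^M=0$, pins $\int_M({\rm c}_2^M)^2$ to the Euler characteristic, and the lower bound $\chi(M)\geq 6$ coming from Theorem~\ref{ab}. The one bookkeeping subtlety I would double-check is the sign and normalization of the degree-four Todd polynomial, since an error there would change the constant relating $\int_M({\rm c}_2^M)^2$ to $\chi(M)$ and thereby the numerical bound.
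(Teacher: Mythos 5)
Your proposal is correct and follows essentially the same route as the paper: Feldman's theorem forces the Todd genus to vanish, which with ${\rm c}_1^M=0$ reduces the degree-four Todd polynomial to $3\int_M({\rm c}_2^M)^2=\int_M{\rm c}_4^M$, and the latter equals the number of fixed points, bounded below by $\mathcal{B}(4)=6$ from Example~\ref{ex1}. The only cosmetic difference is that you phrase $\int_M{\rm c}_4^M=|M^{S^1}|$ via the Euler characteristic while the paper cites Atiyah--Bott--Berline--Vergne localization; these are the same fact.
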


\begin{proof}
The Todd genus of an $8$-dimensional compact and connected symplectic manifold is given by
$$
{\rm Todd}(M)=\int_{M}\frac{-({\rm c}_1^M)^4+4({\rm c}_1^M)^2{\rm c}_2^M+3({\rm c}_2^M)^2+{\rm c}_1^M{\rm c}_3^M-{\rm c}_4^M}{720}.
$$
Since by assumption ${\rm c}_1^M=0$ and the action is not Hamiltonian, by Theorem \ref{fel} we have
\begin{equation}\label{c2}
\int_M ({\rm c}_2^M)^2=\frac{1}{3}\int_{M}{\rm c}_4^M\;.
\end{equation}
By the Atiyah-Bott-Berline-Vergne Localization Theorem (\cite{AB,BV}), it is straightforward to see that
$$
\int_M {\rm c}_4^M= \mbox{number of fixed points of the action}.
$$ 
Since we are assuming ${\rm c}_1^M=0$, condition \eqref{assum} is satisfied. Hence, by Example \ref{ex1}, 
the number of fixed points 
is greater or equal to $6$, which, together with \eqref{c2} gives the desired inequality, and hence ${\rm c}_2^M\neq 0$.
\end{proof}

\begin{remark}
Theorem \ref{g1} does not immediately generalize to dimension 10. In this case
the Todd polynomial is of the form ${\rm c}_1^M\mathcal{C}$, where $\mathcal{C}$ is a combination
of Chern classes of degree 8. Hence, if we assume ${\rm c}_1^M=0$, the Todd genus is zero.
It would be interesting to understand under which conditions 
Theorem \ref{g1} generalizes to dimension $10$ or higher.
\end{remark}

We have the following consequence of Corollary~\ref{general} and Theorem~\ref{ab}.

\begin{theoremL}
Let $(M,\omega)$ be an $2n$-dimensional compact and connected symplectic manifold
with a symplectic but non-Hamiltonian $S^1$ action with 
nonempty, discrete fixed point set, such 
that {\rm (\ref{assum})} is satisfied. If the number of fixed points is
in $[\mathcal{B}(n),n]$ then the Chern class map is not
somewhere injective.
\end{theoremL}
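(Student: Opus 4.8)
The plan is to prove the statement by contraposition, using Theorem~\ref{general} as the main tool. Recall that Theorem~\ref{general} (due to Pelayo--Tolman) asserts that \emph{if} the Chern class map $c_1(M)\colon M^{S^1}\to\Z$ is somewhere injective, \emph{then} the $S^1$-action has at least $n+1$ fixed points. The hypotheses of that theorem---a symplectic $S^1$-action on a compact symplectic $2n$-manifold---are all implied by the hypotheses of the present statement, so it applies directly to our situation.

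First I would record that Theorem~\ref{ab} already guarantees, under the standing assumptions (a symplectic but non-Hamiltonian $S^1$-action with nonempty discrete fixed point set satisfying (\ref{assum})), that the number of fixed points is at least $\mathcal{B}(n)$. Consequently the hypothesis that the number of fixed points lies in the interval $[\mathcal{B}(n),\,n]$ is not vacuous, and its only \emph{additional} content beyond Theorem~\ref{ab} is the upper bound: the action has at most $n$ fixed points.

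Next I would combine this upper bound with Theorem~\ref{general} via its contrapositive. Since the number of fixed points is at most $n$, it is strictly less than $n+1$. If the Chern class map were somewhere injective, Theorem~\ref{general} would force at least $n+1$ fixed points, a contradiction. Hence $c_1(M)\colon M^{S^1}\to\Z$ cannot be somewhere injective, which is exactly the desired conclusion.

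There is essentially no serious obstacle here: the argument is a direct logical deduction chaining the lower bound of Theorem~\ref{ab} (which makes the interval $[\mathcal{B}(n),n]$ meaningful) with the contrapositive of Theorem~\ref{general} (which converts the upper bound $\leq n$ into the failure of somewhere injectivity). The only point requiring a moment's care is to confirm that the present hypotheses indeed subsume those of Theorem~\ref{general}, so that no additional genericity or regularity condition is silently required; this verification is immediate.
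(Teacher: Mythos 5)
Your argument is correct and is exactly the derivation the paper intends: it presents this theorem without a written proof, simply as ``a consequence of'' Theorem~\ref{general} and Theorem~\ref{ab}, which is precisely your chaining of the lower bound from Theorem~\ref{ab} with the contrapositive of the Pelayo--Tolman somewhere-injectivity result. No gaps; the hypotheses of Theorem~\ref{general} are indeed subsumed, as you note.
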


\section{Final remarks} \label{fr}

Proposition~\ref{easy} follows from the fact that the momentum map $\mu$ 
is a Morse-Bott function, whose set of critical points 
$\operatorname{Crit}(\mu)$ is a submanifold of $M$, and coincides with the
fixed point set of the action. 
Thus, if it is not zero dimensional, then there are infinitely
many critical points of $\mu$ and the result
is obvious. If it is
zero dimensional, then $\mu$ is a perfect Morse function
(i.e., the Morse inequalities are equalities) because of the
following classical result: \emph{If 
$f$ is a Morse function on a compact and connected
manifold whose critical points  have only even 
indices, then it is a perfect Morse function} 
(e.g., \cite[Corollary 2.19 on page 52]{Nicolaescu2007}).

Let $m_k(\mu)$ be the number of critical points of $\mu$ of index $k$. The total number
of critical points of $\mu$ is
$$\sum_{k=0}^{2n}m_k(\mu) = 
\sum_{k=0}^{2n}{\rm b}_{k}(M),$$ where 
${\rm b}_k(M):=\dim\left({\rm H}^{k}(M, \mathbb{R}) \right)$ is the $k$th Betti number of $M$. 
The classes $[\omega^k]$ are nontrivial in
${\rm H}^{2k}(M, \mathbb{R})$ for $k=0, \ldots, n$,
so ${\rm b}_{2k}(M) \geq 1$,
and hence the number of 
critical points of $\mu$ is at least $n+1$.

One can try to use Theorem \ref{main} below to
deduce a result analogous to Proposition~\ref{easy} for circle valued 
momentum maps by replacing the Morse inequalities by the
Novikov inequalities (see \cite[Chapter 11, Proposition 2.4]{Pajitnov2006}, 
\cite[Theorem 2.4]{Farber2004}), if all the critical
points of $\mu$ are non-degenerate.

\begin{theorem}[McDuff, '88] \label{main}
Let the circle $S^1$ act 
symplectically on the compact connected symplectic 
manifold $(M, \sigma)$. 
Then either the action admits a standard 
momentum map or, if not, there
exists a $S^1$-invariant 
symplectic form $\omega$ on $M$   that 
admits a circle valued momentum map 
$\mu: M \rightarrow S^1$. 
Moreover, $\mu$ is a Morse-Bott-Novikov function 
and each connected component of 
$M^{S^1} = \operatorname{Crit}(\mu)$ has even index. 
If $ \sigma$  is
integral, then $ \omega= \sigma$. 
\end{theorem}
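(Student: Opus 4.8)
The plan is to realize $\mu$ by integrating the closed $1$-form attached to the action, after arranging its periods to be integral. Since the action preserves $\sigma$, Cartan's formula gives $\mathcal{L}_{\mathcal{X}_M}\sigma = d\,\iota_{\mathcal{X}_M}\sigma = 0$, so the $1$-form $\alpha := \iota_{\mathcal{X}_M}\sigma$ is closed. The action is Hamiltonian precisely when $\alpha$ is exact, and in that case a primitive of $-\alpha$ is the standard real-valued momentum map; this disposes of the first alternative. So I would assume from now on that $\alpha$ is not exact and produce a circle-valued $\mu$.

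First I would compute the periods of $\alpha$. Given a loop $\gamma$ in $M$, sweeping it by the flow $\Phi_s$ of $\mathcal{X}_M$ produces a map of the $2$-torus $T_\gamma(s,t) = \Phi_s(\gamma(t))$; because $\Phi_s$ preserves both $\mathcal{X}_M$ and $\sigma$ one has $\Phi_s^*\alpha = \alpha$, and a direct computation gives $\int_\gamma \alpha = \int_{T_\gamma}\sigma$. Thus every period of $\alpha$ is the symplectic area of an invariant torus. In particular, if $[\sigma]$ is integral then all periods of $\alpha$ lie in $\Z$, so fixing a basepoint $x_0$ and setting $\mu(x) = \int_{x_0}^x \alpha \pmod{\Z}$ gives a well-defined smooth map $\mu\colon M \to S^1 = \R/\Z$ with $d\mu = \alpha = \iota_{\mathcal{X}_M}\sigma$; here $\omega = \sigma$ and no perturbation is needed, which is the last assertion. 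When $[\sigma]$ is not integral, I would perturb: the $S^1$-invariant symplectic forms are open among the invariant closed $2$-forms (compactness of $M$), and rational classes are dense, so after averaging over $S^1$ and rescaling I can choose an invariant symplectic $\omega$ whose class is integral; replacing $\sigma$ by $\omega$ in the previous construction yields $\mu\colon M \to S^1$ with $d\mu = \iota_{\mathcal{X}_M}\omega$.

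Next I would identify the critical set and its local model. Because $\omega$ is nondegenerate, $d\mu = \iota_{\mathcal{X}_M}\omega$ vanishes exactly where $\mathcal{X}_M = 0$, so $\operatorname{Crit}(\mu) = M^{S^1}$, which is a closed symplectic submanifold as the fixed set of a compact group action. To see that $\mu$ is Morse--Bott--Novikov with even indices, I would work near a connected component $F \subset M^{S^1}$ using an $S^1$-equivariant Darboux normal form: the normal bundle splits into weight subbundles $\nu = \bigoplus_w \nu_w$ with $w \neq 0$, each a complex line bundle on which $S^1$ acts with weight $w$, and in these coordinates $\mu$ is locally the genuine Hamiltonian $\mu_0 + \tfrac12\sum_w w\,|z_w|^2$. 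Hence the normal Hessian is nondegenerate, so $\mu$ is Morse--Bott(--Novikov), and its negative eigenspace is $\bigoplus_{w<0}\nu_w$, of real dimension $2\cdot\#\{w<0\}$; the index is therefore even.

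I expect the main obstacle to be the perturbation step: making precise that one may replace $\sigma$ by an $S^1$-invariant symplectic form with integral periods while keeping the fixed-point set and the relation $\operatorname{Crit}(\mu) = M^{S^1}$ intact, and controlling the cohomological bookkeeping that ties integrality of $[\sigma]$ to integrality of the periods of $\iota_{\mathcal{X}_M}\omega$. The local normal-form computation is standard once equivariant Darboux is invoked, but the passage from a closed $1$-form with integral periods to an honest $S^1$-valued map, together with the verification that $d\mu$ is indeed the pullback of the angular form, is where the care is needed.
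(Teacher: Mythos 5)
The paper does not contain its own proof of this statement: it is quoted from McDuff (1988), with the reader referred to \cite{PR12}, Sections 3 and 4, for a detailed proof. Your sketch follows essentially that same standard argument --- the computation identifying the periods $\int_\gamma \iota_{\mathcal{X}_M}\sigma$ with symplectic areas of swept invariant tori (hence integers when $[\sigma]$ is integral, giving $\mu$ with $\omega=\sigma$), the perturbation to an $S^1$-invariant symplectic form with integral class via averaged representatives when $[\sigma]$ is not integral, and the equivariant local normal form near fixed components yielding the Morse--Bott--Novikov property and even indices --- so it is correct and matches the cited proof in approach; the only point worth making explicit is that $\iota_{\mathcal{X}_M}\omega$ is exact on a tubular neighborhood of each fixed component $F$ (its class restricts to zero on $F$), which is what licenses writing $\mu$ locally as a genuine Hamiltonian in the Darboux model.
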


The number
of critical points of the circle-valued momentum
map $\mu$  in Theorem~\ref{main} is $\sum_{k=0}^{2n} m_k(\mu)$. This integer is estimated from below by
$$
\sum_{k=0}^{2n} \Big(\hat{{\rm b}}_k(M)+
\hat{{\rm q}}_k(M)+\hat{{\rm q}}_{k-1}(M) \Big),
$$
where  $\hat{{\rm b}}_k(M)$ is the rank of the 
$\mathbb{Z}((t))$-module 
${\rm H}_k(\widetilde{M},\mathbb{Z}) 
\otimes_{\mathbb{Z}[t, t^{-1}]}\mathbb{Z}((t))$, 
$\hat{{\rm q}}_k(M)$ is the torsion number of this 
module, and $\widetilde{M}$ is the pull back by 
$\mu: M \rightarrow \mathbb{R}/\mathbb{Z}$ of the 
principal $\mathbb{Z}$-bundle $t \in \mathbb{R} 
\mapsto [t] \in \mathbb{R}/\mathbb{Z}$. 
Unfortunately, this lower bound can be zero.
We refer to \cite[Sections 3 and 4]{PR12} for a detailed proof of
Theorem~\ref{main} and \cite[Remark 6]{PR12} for further details.
\smallskip
\medskip

{\emph{Acknowledgements}.  
This paper was written at the Bernoulli Center
in Lausanne (EFPL) during the program on Semiclassical Analysis and Integrable
Systems organized by \'Alvaro Pelayo, Nicolai Reshetikhin, and   San V\~u Ng\d oc, during
July 1-December 31, 2013.    We would like to thank D. McDuff for useful comments}.
 AP was partially
supported by an 
NSF CAREER DMS-1055897.

{\small
\noindent
\\
\'Alvaro Pelayo \\
School of Mathematics\\
Institute for Advanced Study\\
Einstein Drive\\
 Princeton, NJ 08540 USA.
\\
\\
\noindent
Washington University\\
Mathematics Department \\
One Brookings Drive, Campus Box 1146\\
St Louis, MO 63130-4899, USA.\\
{\em E\--mail}: \texttt{apelayo@math.wustl.edu}

\medskip\noindent

\smallskip\noindent
Silvia Sabatini\\
Section de Math\'ematiques, Station 8\\
Ecole Polytechnique F\'ed\'erale de Lausanne\\
CH-1015 Lausanne, Switzerland\\
{\em E\--mail}: \texttt{silvia.sabatini@epfl.ch}

 \end{document}